\newcolumntype{L}[1]{>{\raggedright\let\newline\\\arraybackslash\hspace{0pt}}m{#1}}
\newcolumntype{C}[1]{>{\centering\let\newline\\\arraybackslash\hspace{0pt}}m{#1}}
\newcolumntype{R}[1]{>{\raggedleft\let\newline\\\arraybackslash\hspace{0pt}}m{#1}}
\pgfplotsset{compat=newest}
\numberwithin{theorem}{section}
\newcommand{\TheTitle}{Deciding Robust Feasibility and Infeasibility Using a Set Containment Approach:\texorpdfstring{\\}{} An Application to Stationary Passive Gas Network Operations}
\newcommand{\TheRunningTitle}{Deciding Robust (In-)Feasibility Using Set Containment}
\newcommand{\TheAuthors}{D. A{\ss}mann, F. Liers, M. Stingl and J. C. Vera}
\headers{\TheRunningTitle}{\TheAuthors}
\title{{\TheTitle}\thanks{
\funding{This research was supported by the DFG within Project~B06 in CRC TRR~154, the Energie Campus Nürnberg (supported by funding of the Bavarian State Government), an STSM Grant from COST Action TD1207, and the ZISC.}}}
\author{
  Denis Aßmann\thanks{Department Mathematik, Friedrich-Alexander-Universit{\"a}t
Erlangen-N{\"u}rnberg, Cauerstra{\ss}e 11, 91058 Erlangen, Germany
    (\email{denis.assmann@fau.de}, \email{frauke.liers@fau.de}, \email{michael.stingl@fau.de}).}
  \and
  Frauke Liers\footnotemark[2]
  \and
  Michael Stingl\footnotemark[2]
  \and
  Juan C. Vera\thanks{Department of Econometrics and Operations Research, Tilburg University, 5000 LE Tilburg, The Netherlands (\email{j.c.veralizcano@uvt.nl}).}
}
\begin{document}

\maketitle

\begin{abstract}
  In this paper we study feasibility and infeasibility of nonlinear two-stage fully adjustable robust feasibility problems with an empty first stage.
  This is equivalent to deciding whether the uncertainty set is contained within the projection of the feasible region onto the uncertainty-space.
  Moreover, the considered sets are assumed to be described by polynomials.
  For answering this question, two very general approaches using methods from polynomial optimization are presented --- one for showing feasibility and one for showing infeasibility.
  The developed methods are approximated through sum of squares polynomials and solved using semidefinite programs.\\
  Deciding robust feasibility and infeasibility is important for gas network operations, which is a \nonconvex feasibility problem where the feasible set is described by a composition of polynomials with the absolute value function.
  Concerning the gas network problem, different topologies are considered.
  It is shown that a tree structured network can be decided exactly using linear programming.
  Furthermore, a method is presented to reduce a tree network with one additional arc to a single cycle network.
  In this case, the problem can be decided by eliminating the absolute value functions and solving the resulting linearly many polynomial optimization problems. \\
  Lastly, the effectivity of the methods is tested on a variety of small cyclic networks.
  It turns out that for instances where robust feasibility or infeasibility can be decided successfully, level~2 or level~3 of the Lasserre relaxation hierarchy typically is sufficient.
\end{abstract}

\begin{keywords}
  polynomial optimization, robust optimization, natural gas transport
\end{keywords}

\begin{AMS}
  90C22, 90C30, 90C34, 90C99
\end{AMS}

\section{Introduction}
In this paper we study feasibility and infeasibility of nonlinear two-stage fully adjustable robust feasibility problems with an empty first stage.
We further assume that the considered sets, \ie the uncertainty set and the set of feasible solutions, are described by polynomials.
The overall goal of the considered uncertain problem is to answer the question whether for all possible realizations of the data $u \in \uncertaintySet \subseteq \reals^{n_1}$, there is always a solution $x(u) \in \reals^{n_2}$ ($n_1, n_2 \in \naturals$).
If this question can be answered positively, we call the problem ``robust feasible'' and ``robust infeasible''  otherwise.
Given some vector-valued polynomial constraint functions $\genEqFunc\colon\reals^{n_1}\times\reals^{n_2} \rightarrow \reals^{k_1}$ and $\genIneqFunc\colon\reals^{n_1}\times \reals^{n_2} \rightarrow \reals^{k_2}$, the feasibility question can be formulated as
\begin{equation}\label{eq:fully-adj-formulation}
    \forall u \in \uncertaintySet\; \exists x \in \reals^{n_2} \text{ such that } \genEqFunc(u, x) = 0, \; \genIneqFunc(u, x) \geq 0.
\end{equation}
This question can be answered by checking  whether
\begin{equation*}\label{intro:projection-idea}
    \uncertaintySet \subseteq \proj_u(\condset{(u, x)}{\genEqFunc(u, x)=0,\; \genIneqFunc(u, x) \geq 0}).
\end{equation*}
Since set containment implies that each value of $u \in \uncertaintySet$ is associated with at least one feasible solution $x(u)$, the expression in \cref{eq:fully-adj-formulation} holds.
Compared to set containment problems where the constraint-wise description of each set is known, the description of the projection is typically not available or too expensive to compute.
We address this additional challenge in our methods without an explicit construction of the projection.
Solving this type of problem is a first step towards more complex two-stage robust optimization tasks with non-empty first stage and polynomial second stage.
Due to the polynomial structure of the uncertain problem, this leads to polynomial optimization problems.
There are several approaches in the literature on how to construct relaxations of general polynomial problems \cite{Lasserre2001,Nesterov2000,Parrilo2003,Shor1987}.
Given a concrete instance, we use the well known Lasserre SDP relaxation hierarchy \cite{Lasserre2001,Parrilo2003} and solve the resulting semidefinite problems to global optimality.
Since the feasibility question is tackled using a relaxation approach, the constructed problems can't generally give reliable answers for both feasibility and infeasibility at the same time.
For example, due to the relaxation a problem might give a negative answer although the problem is in fact feasible and vice versa.
This makes it necessary to develop two approaches: one for deciding feasibility and one for deciding infeasibility.

Our contribution towards solving this problem is twofold: \\
First, \cref{lemma:feas-method-remove-proj} shows how the problem can still be solved even if an algebraic description of the projected set is not known.
This leads to a series of polynomial optimization problems which can be solved approximately using the Lasserre SDP relaxation hierarchy \cite{Lasserre2001}.
We call this the \emph{feasibility approach}.\\
Second, we develop another polynomial optimization problem to certify infeasibility of the set containment problem in \cref{lemma:infeas-method-remove-proj}.
Again, this so called \emph{infeasibility approach} works without the algebraic description the projected set.

Our methods are inspired by a gas network problem under uncertainty which is a \nonconvex feasibility problem where the feasible set is described by a composition of polynomials with the absolute value function.
The studied gas network problem can be interpreted as a linear network flow problem with additional variables modelling the nodal pressure and constraints linking the pressure difference of two adjacent nodes with the flow over the connecting arc.
For an overview on the problems arising in the operation of gas networks, the reader is referred to \cite{Rios-Mercado2015}.
A special property of the considered type of gas network problems is that the feasible flows are uniquely determined by a piecewise polynomial equation system.
As a consequence, any fixed uncertainty $u$ leads to a unique flow solution $x(u)$ of the problem (which might still be violated by the constraints).
Exploting this property enables us to circumvent an explicit construction of the projected set.

The methods we develop can be applied to two-stage nonlinear robust optimization problems with an empty first stage and polynomial second stage.
In the case of gas network operation, possible first stage variables can model the decisions of the network operator concerning for example the state of the gas compressor machines.
For deciding robust feasibility, we additionally assume that the solutions $x$ depend uniquely on the uncertain data $u$.
For several reasons, the application of standard robust optimization techniques is difficult in this case.
While there are some concepts for solving nonlinear robust optimization tasks \cite{Ben-Tal2015}, they typically require convex nonlinear functions for an exact tractable reformulation.
The canonical way to deal with second stage (``adjustable'') variables is by replacing them with a decision rule of predetermined structure \cite{Ben-Tal2004} which may result in conservative solutions.
If the problem has random recourse, \ie there are products of uncertain parameters and second stage variables, even the introduction of simple affine linear decision rules can only be done  approximately \cite{Ben-Tal2004}.
Another challenge is posed by the assumption that solutions $x$ depend uniquely on the uncertain data.
Thus, this functions $x(u)$ is the only feasible decision rule for the second stage variables.
We therefore use the projection idea to avoid constructing the correct decision rule explicitly.
Regarding the general computation complexity of set containment problems with convex sets, we refer to \cite{Gritzmann1994}.
A more practical treatment for polyhedra and special convex sets is given by \cite{Mangasarian2002}.
Furthermore, a treatment of set containment regarding polytopes and spectahedra can be found in \cite{Kellner2013}.
This work is further extended in \cite{Kellner2015} to encompass projections of polytopes and spectahedra.

Concerning the problem of set containment between basic semialgebraic sets, the general purpose doubly-exponential cylindrical algebraic decomposition algorithm \cite{Collins1975} can be used to eliminate quantifiers from polynomial systems.
It therefore could be used for the combination of projection and set containment.

The framework~\cite{Magron2015} for approximating image sets of compact semialgebraic sets under a polynomial map can also be used to find outer approximations of projected sets.
However, the robust question cannot be decided with their method as an outer approximation of the projected set in~\eqref{intro:projection-idea} could lead to a false positive conclusion regarding robust feasibility.
On the other hand, outer approximations can be used for deciding robust infeasibility.
However, then for each approximation a certificate against set containment still has to be derived.
This would result in an algorithm with two nested optimization tasks, where each task is solved via a sum of squares based hierarchies.
In this respect, our approach seems to be more direct; in particular one of our key contributions is to avoid using an explicit description of the projection.


Optimal control is another field where the problem of set containment of basic semialgebraic sets occurs.
It can be treated through relaxations of the real Positivstellensatz \cite{Jarvis-Wloszek2003}.
This approach is in some sense similar to the techniques in this paper but cannot be applied to the projected problem.

This work is structured as follows:
In \cref{section:problem-setting}, a general introduction to the problem setting is given.
In order to showcase the issue at hand and the solution ideas of this work, a linear network flow problem under uncertainty is presented in \cref{subsec:problem-setting-tree}.
Next, our solution approaches for the set containment problem are laid out in \cref{sec:robust-problems}.
Our main ideas, the infeasibility and feasibility approach for deciding set containment, are presented in \cref{subsec:infeas-approach} and \cref{subsec:feas-model}, respectively.
\Cref{sec:deciding-robustness-of-gas} shows a practical application of the developed methods to the uncertain gas transport problem.
The problem together with some important results concerning the nominal case  are presented in \cref{subsec:passive-gas-net}.
Next, the robust problem is solved for the special case of tree structured networks in \cref{subsec:robust-problems-tree}.
The application part concludes in \cref{subsec:eliminate-abs} with a list of techniques to remove  absolute value functions arising in the gas context.
After converting the problem to a purely polynomial formulation, the developed set containment methods can be applied.
The practical feasibility of the ideas is demonstrated in \cref{sec:numerical-experiments} through a series of numerical results using a number of small network problems.
This work closes with a summary in \cref{sec:conclusion}.

\section{Problem Description and the Setting Studied Here}\label{section:problem-setting}
A class of parameterized nonlinear feasibility problems is studied.
Let $\genEqFunc\colon\reals^{n_1}\times\reals^{n_2} \rightarrow \reals^{k_1}$ and $\genIneqFunc\colon\reals^{n_1}\times \reals^{n_2} \rightarrow \reals^{k_2}$ ($n_1$, $n_2$, $k_1$, $k_2 \in \naturals$) be some (possibly nonlinear) constraint functions.
For the solution approaches presented in this paper, these functions are assumed to be polynomial.
The first argument of each function is assumed to be a parameter $u$ which is shared by both $\genEqFunc$ and $\genIneqFunc$.
This parameter is often called the \emph{uncertainty} or \emph{uncertain data} of the problem which is an element of an priori given \emph{uncertainty set} $\uncertaintySet \subseteq \reals^{n_1}$.
Our goal is to answer the question whether for all possible realizations $u \in \uncertaintySet$ there is always a feasible solution $x \in \reals^{n_2}$ of the problem:
\begin{equation}\label{eq:fully-adj-formulation-setting}
    \forall u \in \uncertaintySet\; \exists x \in \reals^{n_2} \text{ such that } \genEqFunc(u, x) = 0, \; \genIneqFunc(u, x) \geq 0.
\end{equation}
Let $\feasCoeffsAndFlows = \condset{(u, x) \in \reals^{n_1}\times\reals^{n_2}}{\genEqFunc(u, x) = 0, \; \genIneqFunc(u, x) \geq 0}$ be the set of all feasible pairs of uncertain data $u$ and problem solution $x$.
Question \cref{eq:fully-adj-formulation-setting} can be answered by checking whether the set containment condition
\begin{equation}\label{u-proj-b}
    \uncertaintySet \subseteq \proj_u(\feasCoeffsAndFlows)
\end{equation}
holds.
Since set containment implies that each value of $u \in \uncertaintySet$ is associated with at least one feasible solution $x(u)$, the expression in \eqref{eq:fully-adj-formulation-setting} is satisfied.

In the next subsection, the set containment idea is further explored on the example of a simple linear network flow problem over a tree.

\subsection{Introductory Example: Linear Flow Problem over a Tree}\label{subsec:problem-setting-tree}
We want to further illustrate the problem and its possible solution approaches by means of a simple example.
Let a linear flow problem be given over a tree with lower and upper edge capacities and uncertain demands.
The data appears as an uncertain right hand side of the flow balance equations.
We assume that the demand $u$ of all nodes except some fixed root node fluctuates within a hypercube $\uncertaintySet$.
The model can then be stated as
\begin{equation*}
    \forall u \in \uncertaintySet \; \exists x \colon
    \left\{
    \begin{aligned}
        Ax &= u,\\
        \lb{x} \leq &x \leq \ub{x}
    \end{aligned}
    \right.
\end{equation*}
for some non-singular matrix $A$, see \cref{subsec:passive-gas-net} for details.
After substituting $x = A^{-1}u$, the problem is equivalent to
\begin{equation*}
    \forall u \in \uncertaintySet \colon
        \lb{x} \leq A^{-1}u \leq \ub{x},
\end{equation*}
or
\begin{equation}\label{model:tree-flow-set-containment}
    \uncertaintySet \subseteq \condset{u}{\lb{x} \leq A^{-1}u \leq \ub{x}} = \proj_u\left(\condset{(u, x)}{Ax = u,\; \lb{x} \leq x \leq \ub{x}}\right)
\end{equation}
when stated as a set containment problem.
Since both sets are polyhedral, the question can be decided by optimizing over the remaining constraint functions: if
\begin{align*}
    \max_{u \in \uncertaintySet} (A^{-1}u)_i \leq \ub{x}_i
    \quad\text{and}\quad
    \min_{u \in \uncertaintySet} (A^{-1}u)_i \geq \lb{x}_i
\end{align*}
hold for all $i=1,\ldots,n_2$, so does the set containment condition.
By using linear duality, these inequalities can be checked with one linear optimization problem, see \cref{lemma:polyhedral-set-containmet}.

In this example, we were able to exploit the simple structure to directly construct the projected set in equation~\cref{model:tree-flow-set-containment}.
For more complicated linear or nonlinear constraints, this may not always be possible or computationally too expensive.
For treating the arising problems, we will use ideas from polynomial optimization.

\subsection{Polynomial Optimization}\label{subsec:poly-methods}
We denote with $\naturals = \{1,2,\ldots\}$ the set of natural numbers and with $\naturalsWithZero = \{0,1,2,\ldots\}$ the set of natural numbers including zero.
Let $\reals[x]:=\reals[x_1,\ldots,x_n]$ denote the set of polynomials in $n$ variables with real coefficients.
A polynomial $p \in \reals[x]$ is defined as $p(x) = \sum_{\alpha \in \naturalsWithZero^n} p_\alpha x^{\alpha}$ with coefficients $p_\alpha \in \reals$ and monomials $x^{\alpha} = (x_1^{\alpha_1}, \ldots, x_n^{\alpha_n})$ for $\alpha \in \naturalsWithZero^n$.
With $\abs{x^\alpha} = \sum_{i} \alpha_i$, define the degree of $p$ as $\deg(p):=\max \condset{\abs{x^\alpha}}{p_\alpha \neq 0}$.
Let $\polynomialsPosOnSet[S] = \condset{p\in \reals[x]}{p(x) \geq 0, \,\forall x \in S}$ (resp. $\mathcal{P}  =\polynomialsPosOnSet[\reals^n]$) denote the set of \nonnegative polynomials on a subset $S \subseteq \reals^n$ (resp. on $\reals^n$).

Polynomial optimization is the problem of optimizing a polynomial over a \emph{basic semi-algebraic set} $S = \{x \in \reals^n: q_1(x) \ge 0,\dots,q_m(x) \ge 0\}$. Every polynomial optimization problem can be written as optimizing a linear function over  the cone $\polynomialsPosOnSet[S]$ of \nonnegative polynomials on $S$. Optimizing over $\polynomialsPosOnSet[S]$ is $\mathcal{NP}$-hard for most (interesting) choices of $S$.
Hierarchies of tractable approximations of the cone $\polynomialsPosOnSet[S]$  are typically constructed through sum of squares (SOS) relaxations (\cite{Lasserre2001}), which correspond to semidefinite liftings of subsets of $\polynomialsPosOnSet[S]$ into higher dimensions. The construction is motivated by results related to representations of non-negative polynomials as SOS and the dual theory of moments.  The convergence of Lasserre's method is based on the assumption that $\{q_1,\dots,q_m\}$, the given description of $S$, allows the application of Putinar's Theorem \cite{Putinar1993}. In particular, it assumes $S$ is compact.

To construct Lasserre's hierarchy, first the polynomial optimization problem is reformulated as a conic problem over $\polynomialsPosOnSet[\mathcal{S}]$ using
\[
\inf\{p(x): x \in \mathcal{S}\} = \sup\{\lambda \in \reals: p - \lambda \in \polynomialsPosOnSet[\mathcal{S}]\}.
\]

The \emph{truncated quadratic module} of level $d$ is defined as
\begin{equation}
    M_{d}[\mathcal{S}] = \condSet{\sigma_0(x) + \sum_{i=1}^m \sigma_i(x) q_i(x)}{
    \begin{gathered}
        \sigma_0, \sigma_i \text{ is sum of squares}\\
        \deg(\sigma_0) \leq 2d, \, \deg(\sigma_i q_i) \leq 2d
    \end{gathered}
    }.
\end{equation}
This set can be expressed as the feasible region of $m+1$ semidefinite constraints with linear equalities over the coefficients of $\sigma_0$ and $\sigma_i q_i$ \cite{Shor1987}.
Furthermore, as $M_{d}[\mathcal{S}]\subseteq M_{d+1}[\mathcal{S}] \subseteq \polynomialsPosOnSet[\mathcal{S}]$ holds, this set can be used as an approximation for $\polynomialsPosOnSet[\mathcal{S}]$. Notice that by increasing $d$, a sequence of semidefinite relaxations of increasing size is obtained.

Lasserre shows \cite{Lasserre2001} that under mild conditions, the optimal objective value over these relaxations converges to the optimal value over $\polynomialsPosOnSet[\mathcal{S}]$. \Cref{prop:Lascvgce} states a slightly more general result using our notation.
\begin{proposition}\label{prop:Lascvgce} Let $q_0,q_1,\dots,q_m \in \reals[x]$ be given. Let $\mathcal{S} = \{x \in \reals^n: q_1(x) \ge 0,\dots,q_m(x) \ge 0\}$.
For any pair of polynomials \(q = \sum_\alpha q_\alpha x^\alpha\) and \(p = \sum_\alpha q_\alpha x^\alpha\)  we define \(q \circ p = \sum_\alpha q_\alpha p_\alpha\).
Consider the optimization problem $ \mu = \sup \{q_0 \circ p: p \in   \polynomialsPosOnSet[\mathcal{S}]\}$ and the sequence of relaxations $ \mu_d = \sup \{q_0 \circ p: p \in   M_d[\mathcal{S}]\}$.
Assume there exists a real-valued polynomial $u(x) =  \sigma_0 + \sum_{i=1}^m q_i \sigma_i$ where $\sigma_i$ is SOS for all $i$ and such that $\{x:u(x) \geq  0\}$ is compact.

Then (Putinar \cite{Putinar1993})
\begin{align*}
  &M_1[\mathcal{S}] \subseteq M_2[\mathcal{S}]  \subseteq \cdots \subseteq M_d[\mathcal{S}] \subseteq \cdots \subseteq  \polynomialsPosOnSet[\mathcal{S}] \\
  \text{ and } &\{p \in \reals[x]:p(s) > 0 \: \forall s \in \mathcal{S}\} \subseteq \bigcup_{d>0} M_d[\mathcal{S}]
\end{align*}
and therefore (using same ideas as in Lasserre \cite{Lasserre2001})
\begin{equation*}
   \mu^{1} \le \mu^{2}  \le \cdots \le \mu^{d} \le \cdots \le \mu \mbox{ and } \mu^r \rightarrow \mu \text{ as } d \rightarrow \infty.
\end{equation*}
\end{proposition}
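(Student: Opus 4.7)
The plan is to derive the proposition in three steps that split the statement cleanly: first the chain of inclusions, then the strict-positivity containment, and finally the convergence of the optimal values.

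First I would establish the chain $M_1[\mathcal{S}] \subseteq M_2[\mathcal{S}] \subseteq \cdots \subseteq \polynomialsPosOnSet[\mathcal{S}]$. The nesting $M_d[\mathcal{S}] \subseteq M_{d+1}[\mathcal{S}]$ is immediate from the definition, since a representation at level $d$ is also a valid representation at level $d+1$ (the degree constraints only loosen, and SOS polynomials are closed under inclusion of higher-degree zero padding). For the containment in $\polynomialsPosOnSet[\mathcal{S}]$, observe that any element $p = \sigma_0 + \sum_{i=1}^m \sigma_i q_i$ with $\sigma_0,\sigma_i$ SOS satisfies $p(x) \geq 0$ for all $x \in \mathcal{S}$: the SOS polynomials are globally nonnegative and $q_i(x) \geq 0$ on $\mathcal{S}$ by definition of $\mathcal{S}$.

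Next I would invoke Putinar's theorem directly for the strict-positivity inclusion. The hypothesis that there exists $u(x) = \sigma_0 + \sum q_i \sigma_i$ with SOS $\sigma_i$'s whose superlevel set $\{u(x) \geq 0\}$ is compact is exactly the Archimedean condition on the quadratic module generated by $q_1,\dots,q_m$. Under this assumption, Putinar's Positivstellensatz \cite{Putinar1993} guarantees that any $p \in \reals[x]$ with $p(x) > 0$ for all $x \in \mathcal{S}$ admits a representation $p = \tilde\sigma_0 + \sum_{i=1}^m \tilde\sigma_i q_i$ with SOS multipliers of some finite degree; hence $p \in M_d[\mathcal{S}]$ for $d$ sufficiently large, which gives the desired containment $\{p : p > 0 \text{ on } \mathcal{S}\} \subseteq \bigcup_{d>0} M_d[\mathcal{S}]$.

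The monotonicity $\mu^1 \leq \mu^2 \leq \cdots \leq \mu$ is an immediate consequence of the inclusions, since a supremum over a smaller set is no larger. For the matching lower bound $\mu^d \to \mu$ I would use a standard perturbation argument in the spirit of \cite{Lasserre2001}. Given $\epsilon > 0$, choose $p^\star \in \polynomialsPosOnSet[\mathcal{S}]$ with $q_0 \circ p^\star \geq \mu - \epsilon/2$ (or an arbitrarily large value if $\mu = +\infty$), then set $p_\delta = p^\star + \delta$ for small $\delta > 0$. Since $p^\star \geq 0$ on $\mathcal{S}$ (which is compact by the Archimedean assumption), $p_\delta$ is strictly positive on $\mathcal{S}$, so by the previous step $p_\delta \in M_d[\mathcal{S}]$ for some $d$. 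The linear functional shifts by $\delta\,(q_0)_0$, where $(q_0)_0$ is the constant coefficient of $q_0$, so choosing $\delta$ small enough that $|\delta\,(q_0)_0| \leq \epsilon/2$ yields $\mu^d \geq q_0 \circ p_\delta \geq \mu - \epsilon$.

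The main obstacle I expect is the careful handling of the perturbation step: one must argue that $\mathcal{S}$ is indeed compact (which follows from the Archimedean hypothesis via a containment of $\mathcal{S}$ in the compact set $\{u \geq 0\}$), control the size of the shift in the linear functional, and separately treat the degenerate case $\mu = +\infty$. Each of these is routine once the perturbation strategy is in place, but they require care to execute so that a single $\delta$ works for the chosen $p^\star$ and $\epsilon$.
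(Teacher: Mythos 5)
Your proposal is correct and follows exactly the route the paper itself indicates: the paper gives no written proof of \cref{prop:Lascvgce}, attributing the inclusions and the density of $\bigcup_d M_d[\mathcal{S}]$ to Putinar~\cite{Putinar1993} and the convergence $\mu^d \to \mu$ to the perturbation argument of Lasserre~\cite{Lasserre2001}, which is precisely the $p_\delta = p^\star + \delta$ construction you carry out. The only cosmetic remark is that compactness of $\mathcal{S}$ is not actually needed for strict positivity of $p_\delta$, since $p^\star \geq 0$ on $\mathcal{S}$ already gives $p^\star + \delta \geq \delta > 0$ there.
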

In other words, using Lasserre's hierarchy for general polynomial optimization problems one may approximate the global optimal value $\mu$ as closely as desired by solving a sequence of semidefinite problems with increasing size of the semidefinite matrices and number of constraints.

\section{Deciding Robust Feasibility and Infeasibility for the General Case}\label{sec:robust-problems}
In this section, the two approaches for deciding robustness are developed.
We present a method for certifying infeasibility in \cref{subsec:infeas-approach} as well as a method for proving feasibility in \cref{subsec:feas-model}.

\subsection{A Set Containment Approach for Certifying Infeasibility}\label{subsec:infeas-approach}
A robust optimization problem is said to be infeasible if a scenario $\fixed{u} \in \uncertaintySet$ exists whose corresponding problem is infeasible.
We first introduce an abstract model involving arbitrary functions for solving this problem.
The model is then adapted to the considered case of polynomial functions.
With this approach, negative certificates for set containment of two basic semi-algebraic sets can be found.
Recall that a set $\mathcal{S}$ is called \emph{basic semi-algebraic}, if it is of the form
\begin{equation*}
    \mathcal{S} = \condSet{x}{p_i(x) \geq 0, \quad i = 1,\ldots,n}.
\end{equation*}
where $p_i(x) \in \reals[x]$ for $i = 1,\ldots,n$ ($n \in \naturals$) are polynomials.
For any set $\mathcal{S}$, let $\funcsPosOnSet[\mathcal{S}] := \condSet{f\colon \reals^n \rightarrow \reals}{f(x) \geq 0 \text{ for } x \in \mathcal{S}}$ be the set of all nonnegative functions on $\mathcal{S}$.
The set $\funcsPosOnSet[\mathcal{S}]$ is nonempty since it always contains $h(x) \equiv 0$, regardless of the particular choice of $\mathcal{S}$.

Let $\subSet, \superSet$ be any subsets of $\reals^n$.
It is clear that
\begin{equation}\label{eq:false-set-containement-char}
    \subSet \not\subseteq \superSet \iff \exists x \in \subSet\colon x \not\in \superSet \iff \subSet \minus \superSet \neq \emptyset,
\end{equation}
where we denote with \(\subSet \minus \superSet = \condSet{x \in \subSet}{x \not\in \superSet}\) the set difference of \(\subSet\) and \(\superSet\).
With this definition, \cref{eq:false-set-containement-char} can be extended to
\begin{equation*}
    \subSet \minus \superSet \neq \emptyset \iff \exists\,f \in \funcsPosOnSet[\superSet] \text{ and } x \in \subSet \text{ such that } f(x) < 0.
\end{equation*}
The last expression can be rewritten using an optimization problem.
Let the abstract separation problem \cref{prob:abstract-separation} be defined as
\begin{equation}\tag{ASep}\label{prob:abstract-separation}
    \begin{aligned}
        \inf & \,f(x), \\
        x &\in \subSet, \\
        f &\in \funcsPosOnSet[\superSet].
    \end{aligned}
\end{equation}
We employ the usual definition of $\inf_X f(x) = +\infty$ if $X=\emptyset$.
For the optimal value of \cref{prob:abstract-separation} it holds that
\begin{equation*}
    \inf_{x \in \subSet, f \in \funcsPosOnSet[\superSet]} f(x) =
    \begin{cases}
        +\infty, &\text{ if } \subSet = \emptyset \\
        0,       &\text{ if } \subSet \neq \emptyset \text{ and } \subSet \subseteq \superSet, \\
        -\infty, &\text{ if } \subSet \neq \emptyset \text{ and } \subSet \not\subseteq \superSet.
    \end{cases}
\end{equation*}
Combining the first two cases yields
\begin{equation}
    \subSet \not\subseteq \superSet \iff \inf_{x \in \subSet, f \in \funcsPosOnSet[\superSet]} f(x) = -\infty.
\end{equation}

In order to tackle this optimization task in practice, the abstract problem is approximated by a polynomial optimization problem.
We first replace the set of functions $\funcsPosOnSet[\superSet]$ by the set
\begin{equation*}
    \polynomialsPosOnSet[\superSet] := \condSet{p \in \polynomials}{p(x) \geq 0 \text{ for } x \in \superSet}
\end{equation*}
of polynomials that are nonnegative on \superSet.
Since both $p$ and $x$ are variables, $p(x)$ cannot be cast directly as part of a polynomial optimization problem.
Therefore, instead of minimizing $p(x)$, we minimize the Lebesgue integral of $p$ over $\subSet$.
A negative integral \(\int_{\subSet} p(x) \d \mu\) implies the existence of some \(\fixed{x} \in \subSet\) with \(p(\fixed{x}) < 0\):
\begin{equation*}
    \begin{aligned}
        \inf_p & \, \int_{\subSet} p(x) \d \mu, \\
        p &\in \polynomialsPosOnSet[\superSet].
    \end{aligned}
\end{equation*}

Using the definition $p(x) = \sum_\alpha p_\alpha x^\alpha$, the objective can be rewritten in terms of the moments of $\mu$:
\begin{equation}\tag{PolySep}\label{prob:poly-sep}
  \begin{gathered}
    \inf_p \int_{\subSet} p \d\mu = \inf_p \sum_\alpha p_\alpha \int_{\subSet} x^\alpha \d\mu\\
    p \in \polynomialsPosOnSet[\superSet].
  \end{gathered}
\end{equation}
Since the moments $\int_{\subSet} x^\alpha \d\mu$ can be calculated in advance, the objective of \cref{prob:poly-sep} is a linear function in $p$.

We call this problem the polynomial separation problem.
If there exists $p$, such that the integral over $\subSet$ is negative, there must be some point $x \in \subSet$ with $p(x) < 0$.
Then, by definition of $p$, it holds that $x \not\in \superSet$.

The integration is a weaker test for the existence of an $x \in \subSet$ with $p(x) < 0$ than just evaluating $p(x)$ (see Lemma~\ref{lemma:pos-polynomial-on-open-ball}).
For practical applications, the moments $\int_{\subSet} x^\alpha \d \mu$ need to available.
With respect to the presented robust gas network problem, this is no limitation since $\subSet = \uncertaintySet$ is a hypercube.
In a similar context, precomputed moments of a simple superset, \eg of a sphere or a box, are used to approximate the volume of an arbitrary basic compact semialgebraic set in \cite{Henrion2009b}.

The next lemma identifies conditions for \subSet, \superSet for which a polynomial $p \in \polynomialsPosOnSet[\superSet]$ exists with $\int_{\subSet} p(x) \d \mu < 0$.
This means that under these conditions, problems \cref{prob:abstract-separation,prob:poly-sep} are equivalent.

\begin{lemma}\label{lemma:pos-polynomial-on-open-ball}
    Let $\subSet, \superSet \subseteq \reals^n$ be two bounded sets with $\subSet \minus \superSet \neq \emptyset$.
    Suppose that $\subSet \minus \superSet$ contains an open subset. \\
    Then there exists a polynomial $p \in \polynomialsPosOnSet[\superSet]$ with $\int_{\subSet} p(x) \d \mu < 0$.
\end{lemma}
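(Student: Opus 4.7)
The plan is to exhibit an explicit family of polynomials $(p_k)_{k \in \naturals}$, each nonnegative on $\superSet$, such that $\int_{\subSet} p_k \d\mu \to -\infty$ as $k \to \infty$. The construction concentrates polynomial ``weight'' onto a small ball contained in $\subSet \minus \superSet$, in a way that mimics a compactly supported bump without actually being compactly supported.

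First, since $\subSet \minus \superSet$ contains a nonempty open set, it contains an open ball $B(x_0, r)$ of some radius $r > 0$. In particular $B(x_0, r) \subseteq \subSet$ and $B(x_0, r) \cap \superSet = \emptyset$, so $\|x - x_0\|^2 \ge r^2$ for every $x \in \superSet$. Since $\subSet$ is bounded, choose $R > r$ with $\subSet \subseteq \overline{B(x_0, R)}$, and define
\[
p_k(x) = \bigl(\|x - x_0\|^2 - r^2\bigr)\bigl(R^2 - \|x - x_0\|^2\bigr)^{2k}.
\]
Each $p_k$ is a polynomial, and $p_k \in \polynomialsPosOnSet[\superSet]$ is immediate: the first factor is nonnegative on $\superSet$ by the choice of $r$, and the second factor is a perfect square.

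The main step is to estimate $\int_{\subSet} p_k \d\mu$. Split $\subSet$ into three pieces: the inner ball $B(x_0, r/2)$, the annulus $B(x_0, r) \setminus B(x_0, r/2)$, and $\subSet \setminus B(x_0, r)$. On the annulus the first factor of $p_k$ is negative and the second nonnegative, so $p_k \le 0$ and this piece contributes nonpositively. On $B(x_0, r/2)$ the first factor is at most $-3r^2/4$ and the second is at least $(R^2 - r^2/4)^{2k}$, giving a contribution bounded above by $-\tfrac{3 r^2}{4}(R^2 - r^2/4)^{2k}\,\mu(B(x_0, r/2))$. On $\subSet \setminus B(x_0, r)$ both factors lie in $[0, R^2 - r^2]$, so $p_k \le (R^2 - r^2)^{2k+1}$, yielding a contribution at most $(R^2 - r^2)^{2k+1}\,\mu(\subSet)$. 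The ratio between the (absolute) negative part and the positive part grows like $\bigl[(R^2 - r^2/4)/(R^2 - r^2)\bigr]^{2k}$, which tends to $\infty$ because $r^2/4 < r^2$. For $k$ large enough, the negative contribution dominates and $\int_{\subSet} p_k \d\mu < 0$.

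The main obstacle is that a polynomial cannot be compactly supported, so any polynomial that is negative on $B(x_0, r)$ necessarily takes positive values elsewhere, and one must ensure these positive values do not overwhelm the integral. The resolution is the weight $(R^2 - \|x - x_0\|^2)^{2k}$: it peaks at $x_0$ with value $R^{4k}$ yet stays at most $(R^2 - r^2)^{2k}$ throughout $\subSet \setminus B(x_0, r)$, and as $k$ grows this gap is enough to push the integral negative regardless of the size of $\mu(\subSet)$.
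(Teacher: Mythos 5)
Your proof is correct, and it follows the same overall strategy as the paper's: build a radial polynomial centered at $x_0$ that is nonnegative off the inner ball $\openBall{r}{x_0}$, drive its values on that ball to $-\infty$ by raising a factor to a high power, and split the integral over $\subSet$ into the inner ball, an annulus, and the remainder. The differences are in the execution, and they are worth noting. The paper uses $p_l = 1 - q^l(\norm{x}^2)$ with $q$ normalized so that $0 \le p_l \le 1$ on the annulus $\openBall{R}{x_0}\setminus\openBall{r}{x_0}$; the positive part of the integral is then bounded by a constant, and the work goes into showing $\int_{\openBall{r}{x_0}} q^l \to \infty$, which the paper does via polar coordinates and a telescoping lower bound on consecutive terms. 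Your product form $(\norm{x-x_0}^2 - r^2)(R^2 - \norm{x-x_0}^2)^{2k}$ lets both the positive and negative contributions grow, but the comparison reduces to the elementary observation that $\bigl((R^2 - r^2/4)/(R^2 - r^2)\bigr)^{2k} \to \infty$, avoiding the change of variables and the telescoping argument entirely. Your construction also buys a small strengthening for free: since the factor $(R^2 - \norm{x-x_0}^2)^{2k}$ is an even power, your $p_k$ is nonnegative on the \emph{entire} complement of $\openBall{r}{x_0}$, so only boundedness of $\subSet$ is used and the hypothesis that $\superSet$ is bounded could be dropped; the paper's $p_l$ is negative outside $\openBall{R}{x_0}$ and genuinely needs $\superSet \subseteq \openBall{R}{x_0}$.
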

\begin{proof}
    Since $\subSet \minus \superSet$ contains an open subset, there exists $x_0 \in \reals^n$ and $r > 0$ such that $\subSet \minus \superSet \supseteq \openBall{r}{x_0} =: \condSet{x \in \reals^n}{\norm{x - x_0} < r}$.
    Without loss of generality, we assume that $x_0 = 0$.
    This can always be guaranteed by applying a simple translation to \subSet and \superSet.
    Due to both sets being bounded, there exists an $R > r$ such that $\superSet, \subSet \subseteq \openBall{R}{0}$.

    We prove this lemma by constructing a polynomial $p\colon \reals^n \rightarrow \reals$ that is non-negative on $\openBall{R}{0} \minus \openBall{r}{0} \supseteq \superSet$ and satisfies $\int_{\openBall{R}{0}} p \d \mu < 0$.
    If such a $p$ exists, it holds that
    \begin{align*}
        \int_{\subSet} p \d \mu &= \int_{\subSet \minus \openBall{r}{0}}p \d \mu + \int_{\openBall{r}{0}}p \d \mu \\
        &\leq \int_{\openBall{R}{0} \minus \openBall{r}{0}}p \d \mu + \int_{\openBall{r}{0}}p \d \mu = \int_{\openBall{R}{0}}p \d \mu < 0.
    \end{align*}

    In order to construct $p$, let
    \begin{equation*}
        q(t) := [c_1 (t - c_2)]^2
    \end{equation*}
    be a univariate polynomial with constants $c_1:=\frac{2}{R^2-r^2}$, $c_2:=\frac{R^2+r^2}{2}$.
    By construction, the following holds:
    \begin{subequations}\label{eq:qpoly-props}
        \begin{align}
            q(c_2) = 0&                                           \label{eq:qpoly-props:a}\\
            q(t^2) = 1           &\text{ iff }  t \in \{r, R\},  \label{eq:qpoly-props:b}\\
            q(t^2) \geq 1        &\text{ for } t \in [0, r],      \label{eq:qpoly-props:c}\\
            0 \leq q(t^2) \leq 1 &\text{ for } t \in [r, R].      \label{eq:qpoly-props:d}
        \end{align}
    \end{subequations}
    Taking the $l$-th ($l \in \naturals$) power of $q$ preserves properties \cref{eq:qpoly-props:a,eq:qpoly-props:b,eq:qpoly-props:c,eq:qpoly-props:d}.
    Furthermore, the polynomial
    \begin{equation*}
        p_l(t) := 1-q^l(t)
    \end{equation*}
    satisfies
    \begin{align*}
        p_l(c_2) = 1&\\
        p_l(t^2) = 0           &\text{ iff }  t \in \{r, R\},\\
        p_l(t^2) \leq 0        &\text{ for } t \in [0, r],\\
        0 \leq p_l(t^2) \leq 1 &\text{ for } t \in [r, R].
    \end{align*}

    We now show that there exists $l \in \naturals$ such that the radial symmetric polynomial $p_l(\norm{x}^2)$ is non-negative on $\openBall{R}{0} \minus \openBall{r}{0} \supseteq \superSet$ and satisfies $\int_{\openBall{R}{0}} p_l(\norm{x}^2) \d \mu < 0$:
    \begin{align*}
        \int_{\openBall{R}{0}} p_l(\norm{x}^2) \d\mu &= \int_{\openBall{R}{0} \minus \openBall{r}{0}} p_l(\norm{x}^2) \d\mu + \int_{\openBall{r}{0}} p_l(\norm{x}^2) \d\mu \\
        &\leq \int_{\openBall{R}{0} \minus \openBall{r}{0}} 1 \d\mu + \int_{\openBall{r}{0}} 1 - q^l(\norm{x}^2) \d\mu \\
        &= \int_{\openBall{R}{0}} 1 \d\mu - \int_{\openBall{r}{0}} q^l(\norm{x}^2)\d\mu.
    \end{align*}
    In order to complete the proof, we show that $\lim_{l \rightarrow \infty} \int_{\openBall{r}{0}} q^l(\norm{x}^2)\d\mu = \infty$.
    Using a substitution of variables and exploiting the radial symmetry, the integral over the $n$-dimensional ball can be transformed to a univariate integral:
    \begin{equation*}
        \int_{\openBall{r}{0}} q^l(\norm{x}^2)\d\mu = \overbrace{n \int_{\openBall{1}{0}} 1 \d\mu}^{=:\Gamma > 0} \int_0^r q^l(t^2) t^{n-1} \d t
    \end{equation*}
    Now we calculate the difference between two integrals in the sequence while omitting the positive coefficient $\Gamma$:
    \begin{align*}
        &\int_0^r q^{l+1}(t^2) t^{n-1} \d t - \int_0^r q^l(t^2) t^{n-1} \d t \\
        =& \int_0^r \overbrace{q^l(t^2)}^{\geq 1}\overbrace{t^{n-1}}^{\geq 0}\overbrace{\left(q(t^2) - 1\right)}^{\geq 0} \d t\\
        \geq& \int_0^r t^{n-1} \left(q(t^2) - 1\right) \d t = c > 0
    \end{align*}
    Since the difference between two consecutive elements of the series is bounded from below by a strictly positive constant $c$, the series diverges to $+\infty$.
    This implies the existence of some $l \in \naturals$ such that $\int_{\openBall{R}{0}} p_l(\norm{x}^2) \d\mu < 0$.
\end{proof}

Using $p(u) = \sum_{\alpha} p_\alpha u^\alpha$, the corresponding optimization problem to certify infeasibility of the robust problem is
\begin{equation}\tag{PolySepProj}\label{prob:poly-sep-proj}
\begin{gathered}
    \inf_p \sum_\alpha p_\alpha \int_{\uncertaintySet} u^\alpha \d\mu,\\
    p \in \polynomialsPosOnSet[\proj_u(\feasCoeffsAndFlows)].
\end{gathered}
\end{equation}
Without explicit knowledge of the projection $\proj_u(\feasCoeffsAndFlows)$, it is unclear how the set $\polynomialsPosOnSet[\proj_u(\feasCoeffsAndFlows)]$ can be expressed as part of a polynomial optimization problem.
We present an equivalent model which expresses this constraint by introduction of additional linear constraints over the coefficients of the unknown polynomial.
\begin{lemma}\label{lemma:infeas-method-remove-proj}
    Consider the two optimization problems
    \begin{align*}(1)\quad
        \begin{gathered}
            \inf_p \sum_\alpha p_\alpha \int_{\uncertaintySet} u^\alpha \d\mu,\\
            p \in \polynomialsPosOnSet[\proj_u(\feasCoeffsAndFlows)],
        \end{gathered}&&\text{and}&&(2)\quad
        \begin{gathered}
            \inf_{\tilde{p}} \sum_{\alpha} \tilde{p}_{\alpha,\beta} \int_{\uncertaintySet} u^\alpha x^\beta \d\mu,\\
            \tilde{p}_{\alpha, \beta} = 0 \quad \forall \beta \neq 0,\\
            \tilde{p} \in \polynomialsPosOnSet[\feasCoeffsAndFlows],
        \end{gathered}
    \end{align*}
    where $p(u) = \sum_{\alpha} p_{\alpha} u^\alpha$ is a polynomial in \(u\) and $\tilde{p}(u, x) = \sum_{\alpha, \beta} \tilde{p}_{\alpha, \beta} u^\alpha x^\beta$ is a polynomial in both \(u\) and \(x\).

    Any feasible point $\optimal{p}$ of (1) can be extended to a feasible point $\optimal{\tilde{p}}$ of (2) and vice versa.
    Furthermore, the feasible points $\optimal{p}$ and $\optimal{\tilde{p}}$ have the same objective values.
\end{lemma}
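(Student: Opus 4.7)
The plan is to set up a direct bijection between feasible points of (1) and (2) that preserves the objective value, using the constraint $\tilde p_{\alpha,\beta}=0$ for $\beta\ne0$ to identify polynomials in $u$ alone with polynomials in $(u,x)$ that are constant in $x$.

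First, I would make the reduction explicit. The linear constraint $\tilde{p}_{\alpha,\beta}=0$ for every multi-index $\beta\neq 0$ forces $\tilde p$ to depend on $u$ only, i.e. $\tilde p(u,x)=\sum_{\alpha}\tilde p_{\alpha,0}\,u^\alpha$. Hence the bijection will simply be $p_\alpha \leftrightarrow \tilde p_{\alpha,0}$, with the remaining coefficients of $\tilde p$ set to zero. Under this identification, since $x^\beta=1$ when $\beta=0$, the objective of (2) collapses to
\begin{equation*}
    \sum_{\alpha,\beta}\tilde p_{\alpha,\beta}\int_{\uncertaintySet} u^\alpha x^\beta\,\d\mu
    \;=\;\sum_{\alpha}\tilde p_{\alpha,0}\int_{\uncertaintySet} u^\alpha\,\d\mu
    \;=\;\sum_{\alpha} p_\alpha\int_{\uncertaintySet}u^\alpha\,\d\mu,
\end{equation*}
so the two objectives automatically agree. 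The only nontrivial work is to verify that feasibility of one problem corresponds to feasibility of the other.

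For the forward direction ($\optimal p\Rightarrow \optimal{\tilde p}$), given $\optimal p\in\polynomialsPosOnSet[\proj_u(\feasCoeffsAndFlows)]$, define $\optimal{\tilde p}(u,x):=\optimal p(u)$. For any $(u,x)\in\feasCoeffsAndFlows$ we have $u\in\proj_u(\feasCoeffsAndFlows)$ by definition of the projection, and therefore $\optimal{\tilde p}(u,x)=\optimal p(u)\geq 0$, giving $\optimal{\tilde p}\in\polynomialsPosOnSet[\feasCoeffsAndFlows]$. For the backward direction ($\optimal{\tilde p}\Rightarrow \optimal p$), given a feasible $\optimal{\tilde p}$ of (2), set $\optimal p(u):=\sum_\alpha \optimal{\tilde p}_{\alpha,0}u^\alpha$, which is well-defined because the other coefficients vanish. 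For any $u\in\proj_u(\feasCoeffsAndFlows)$, there exists at least one $x$ with $(u,x)\in\feasCoeffsAndFlows$; then $\optimal p(u)=\optimal{\tilde p}(u,x)\geq 0$, so $\optimal p\in\polynomialsPosOnSet[\proj_u(\feasCoeffsAndFlows)]$.

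There is no real obstacle here: the argument is essentially the observation that a polynomial in $u$ is nonnegative on $\proj_u(\feasCoeffsAndFlows)$ if and only if, viewed as an $x$-constant polynomial on $\reals^{n_1}\times\reals^{n_2}$, it is nonnegative on $\feasCoeffsAndFlows$. The mild subtlety worth flagging is the matching of objective values, which relies on reading $x^\beta$ as the constant $1$ when $\beta=0$ in the mixed integral of (2); the linear constraints kill every other moment, so no information about a measure on the $x$-coordinates is needed. This is exactly what makes the reformulation useful in practice, since the projection $\proj_u(\feasCoeffsAndFlows)$ is eliminated from the description of the feasible region.
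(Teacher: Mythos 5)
Your proposal is correct and follows essentially the same route as the paper's proof: the forward direction extends $p$ by the inclusion map into $\reals[u,x]$ and uses $\polynomialsPosOnSet[\proj_u(\feasCoeffsAndFlows)\times\reals^{n_2}]\subseteq\polynomialsPosOnSet[\feasCoeffsAndFlows]$, while the backward direction uses that an $x$-independent $\tilde p$ nonnegative on $\feasCoeffsAndFlows$ must be nonnegative at every $u$ in the projection because some witness $x$ exists. If anything, your write-up is slightly more explicit than the paper's on the backward step and on why the two objectives coincide.
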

\begin{proof}
    ``$\Rightarrow$'':
    Let $\optimal{p}$ be any feasible point of (1) with objective value $\optimal{z} = \sum_\alpha \optimal{p}_\alpha \int_{\uncertaintySet} u^\alpha \d\mu$.
    Consider the inclusion map from  $\reals[u]$ to $\reals[u,x]$, which maps $\optimal{p}$ to $\optimal{\tilde{p}}$ where $\optimal{\tilde{p}}(u, x) = \sum_{\alpha, \beta} \optimal{\tilde{p}}_{\alpha, \beta} u^\alpha x^\beta$ where
    \begin{equation}
        \optimal{\tilde{p}}_{\alpha, \beta} :=
        \begin{cases}
            \optimal{p}_{\alpha}, &\text{ if } \beta = 0,\\
            0,          &\text{ if } \beta \neq 0.
        \end{cases}
    \end{equation}
    By construction, for any $u \in \proj_u(\feasCoeffsAndFlows)$ and $x \in \reals^{|N|}$, we have $\optimal{\tilde{p}}(u,x) = \optimal{p}(u) \ge 0$. Therefore  $\optimal{\tilde{p}} \in \polynomialsPosOnSet[\proj_u(\feasCoeffsAndFlows) \times \reals^{|N|}] \subseteq \polynomialsPosOnSet[\feasCoeffsAndFlows]$.
    That is $\optimal{\tilde{p}}$ is feasible for (2).\\
    ``$\Leftarrow$'':
    Let $\optimal{\tilde{p}}$ be any feasible point of $(2)$.
    Since all coefficients $\optimal{\tilde{p}}_{\alpha, \beta}$ with $\beta\neq0$ are zero, $\optimal{\tilde{p}}$ is independent of $x$ and it holds that $\optimal{\tilde{p}} \in \polynomialsPosOnSet[\proj_u(\feasCoeffsAndFlows) \times \reals^{|N|}]$.
    Let $\optimal{p}(u) = \sum_{\alpha} \optimal{p}_{\alpha} u^\alpha$ be the remaining polynomial in $u$.
    Together with $\optimal{\tilde{p}} \in \polynomialsPosOnSet[\proj_u(\feasCoeffsAndFlows) \times \reals^{|N|}]$, this implies $\optimal{p} \in \polynomialsPosOnSet[\proj_u(\feasCoeffsAndFlows)]$.
\end{proof}

For the remainder of this section, we assume that the problem is robust infeasible, i.e. $\setDiff:= \uncertaintySet \setminus \proj_u(\feasCoeffsAndFlows)$ is non-empty.
In order to apply \cref{lemma:pos-polynomial-on-open-ball}, $\setDiff$ has to contain an open subset.
The next proposition shows that for the given sets, this is no restriction since such a subset always exists.
Given a set $\mathcal{S} \subseteq \reals^n$, we denote with $\closure(\mathcal{S})$, $\interior(\mathcal{S})$, $\boundary\mathcal{S}$, and $\setComplement{\mathcal{S}}$ the closure, interior, boundary, and complement of $\mathcal{S}$, respectively.
For this paper, the uncertainty set $\uncertaintySet$ is assumed to be a full-dimensional hypercube or full-dimensional polyhedron.
Therefore, $\uncertaintySet = \closure(\interior(\uncertaintySet))$ always holds for our choices of $\uncertaintySet$.
\begin{proposition}\label{prop:open-subset-always-exists}
    Let $\uncertaintySet \subseteq \reals^{n_1}$ be a set with $\uncertaintySet = \closure(\interior(\uncertaintySet))$. Let $\feasCoeffsAndFlows \subseteq \reals^{n_1} \times \reals^{n_2}$ be a compact set and let  $\setDiff= \uncertaintySet \setminus \proj_u(\feasCoeffsAndFlows) \neq \emptyset$.
    Then $\setDiff$ contains an open subset.
\end{proposition}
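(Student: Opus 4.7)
The plan is to exploit three facts: that $\proj_u(\feasCoeffsAndFlows)$ is closed (being a continuous image of a compact set), that its complement is therefore open, and that every point of $\uncertaintySet$ can be approached by points of $\interior(\uncertaintySet)$ by the hypothesis $\uncertaintySet = \closure(\interior(\uncertaintySet))$. Combining these, one can carve out a nonempty open set inside $\setDiff$ around any chosen witness point.

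First, I would observe that the projection $\proj_u\colon \reals^{n_1}\times\reals^{n_2} \to \reals^{n_1}$ is continuous, and since $\feasCoeffsAndFlows$ is compact, the image $\proj_u(\feasCoeffsAndFlows)$ is compact, hence closed in $\reals^{n_1}$. Its complement $\setComplement{\proj_u(\feasCoeffsAndFlows)}$ is therefore open in $\reals^{n_1}$.

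Next, fix any $u_0 \in \setDiff$, which exists by hypothesis. Since $u_0 \in \setComplement{\proj_u(\feasCoeffsAndFlows)}$ and the latter is open, there exists $r > 0$ with $\openBall{r}{u_0} \subseteq \setComplement{\proj_u(\feasCoeffsAndFlows)}$. Moreover, because $u_0 \in \uncertaintySet = \closure(\interior(\uncertaintySet))$, every open neighborhood of $u_0$ must meet $\interior(\uncertaintySet)$; in particular $V := \openBall{r}{u_0} \cap \interior(\uncertaintySet)$ is nonempty. Since both sets in the intersection are open, $V$ is open. Finally,
\begin{equation*}
    V \subseteq \interior(\uncertaintySet) \cap \setComplement{\proj_u(\feasCoeffsAndFlows)} \subseteq \uncertaintySet \setminus \proj_u(\feasCoeffsAndFlows) = \setDiff,
\end{equation*}
so $V$ is a nonempty open subset of $\setDiff$, completing the argument.

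There is no real obstacle here; the only subtle point is that $\uncertaintySet$ itself need not be open, so one cannot simply take an open ball around $u_0$ that lies entirely in $\uncertaintySet$. The hypothesis $\uncertaintySet = \closure(\interior(\uncertaintySet))$ is precisely what bridges this gap by guaranteeing that the open ball furnished by the complement of $\proj_u(\feasCoeffsAndFlows)$ intersects $\interior(\uncertaintySet)$ in a nonempty open piece. Compactness of $\feasCoeffsAndFlows$ (and hence closedness of its projection) is the other essential ingredient; without it, the complement of the projection would not need to be open and the construction would fail.
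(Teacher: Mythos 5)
Your proof is correct and follows essentially the same route as the paper's: closedness of $\proj_u(\feasCoeffsAndFlows)$ via compactness, openness of its complement, and the hypothesis $\uncertaintySet = \closure(\interior(\uncertaintySet))$ to find an interior point of $\uncertaintySet$ inside the ball around the witness. The only cosmetic difference is that you handle interior and boundary witness points uniformly, whereas the paper splits into the two cases explicitly.
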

\begin{proof}
    We need to show that $\interior(\setDiff) = \interior(\uncertaintySet) \cap \setComplement{(\proj_u(\feasCoeffsAndFlows))} \neq \emptyset$.
    Since $\feasCoeffsAndFlows$ is compact, $\proj_u(\feasCoeffsAndFlows)$ is closed and thus $\setComplement{(\proj_u(\feasCoeffsAndFlows))}$ is an open set.

    Pick any $x \in \setDiff = \uncertaintySet \cap \setComplement{(\proj_u(\feasCoeffsAndFlows))}$.
    If $x \in \interior(\uncertaintySet)$, then $x \in \interior({\setDiff})$ holds as well since $\setComplement{(\proj_u(\feasCoeffsAndFlows))}$ is an open set.\\
    Otherwise, assume that $x \in \partial \uncertaintySet$.
    With $x \in \setComplement{(\proj_u(\feasCoeffsAndFlows))}$, there exists $\varepsilon > 0$ such that $\openBall{\varepsilon}{x} \subseteq \setComplement{(\proj_u(\feasCoeffsAndFlows))}$.
    Since $\uncertaintySet = \closure(\interior(\uncertaintySet))$, there exists $y \in \interior(\uncertaintySet) \cap \openBall{\varepsilon}{x} \subseteq \setComplement{(\proj_u(\feasCoeffsAndFlows))}$.
    Therefore, $y \in \interior(\setDiff)$.

    This concludes that for the given sets, $\setDiff$ always contains an open subset if $\setDiff$ is non-empty.
\end{proof}

With \cref{prop:open-subset-always-exists} and \cref{lemma:pos-polynomial-on-open-ball}, the  separation problem \cref{prob:poly-sep} can certify infeasibility if the assumptions of \cref{prop:open-subset-always-exists} are satisfied.
In practice, this optimization problem is then approximated by some finite relaxation of the Lasserre hierarchy using \cref{prop:Lascvgce}.
The question remains whether for sufficiently large levels of the hierarchy, the separation polynomial as given by \cref{lemma:pos-polynomial-on-open-ball} can always be found.
After all, not all positive polynomials can be expressed by sum of square polynomials.
This is no restriction as the following proposition shows:
\begin{proposition}
    There is some finite level of the Lasserre hierarchy for which the corresponding SDP approximation of \cref{prob:poly-sep} yields a negative objective if $\setDiff \neq \emptyset$.
\end{proposition}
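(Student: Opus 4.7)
The plan is to combine \cref{prop:open-subset-always-exists}, \cref{lemma:pos-polynomial-on-open-ball}, and Putinar's theorem (as invoked in \cref{prop:Lascvgce}) to exhibit a concrete polynomial with strictly negative integral that lies in $M_d[\superSet]$ for some finite $d$, thereby forcing the level-$d$ SDP relaxation to have a strictly negative infimum.

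First, I would use the assumption $\setDiff \ne \emptyset$ together with \cref{prop:open-subset-always-exists} to conclude that $\setDiff$ contains an open subset. \Cref{lemma:pos-polynomial-on-open-ball} then produces a polynomial $\hat{p} \in \polynomialsPosOnSet[\superSet]$ with $\int_{\uncertaintySet} \hat{p} \d\mu = -c$ for some $c > 0$. The polynomial $\hat{p}$ is only guaranteed to be non-negative on $\superSet$, so Putinar's representation cannot be invoked directly; I would therefore perturb it. Letting $V := \mu(\uncertaintySet)$, which is finite because $\uncertaintySet$ is bounded, I pick any $\varepsilon \in (0, c/V)$ and set $\hat{p}_\varepsilon := \hat{p} + \varepsilon$. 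Then $\hat{p}_\varepsilon \ge \varepsilon > 0$ pointwise on $\superSet$, while the integral has only shifted by $\varepsilon V$, giving
\[
    \int_{\uncertaintySet} \hat{p}_\varepsilon \d\mu = -c + \varepsilon V < 0.
\]

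Under the Archimedean hypothesis on the polynomial description of $\superSet$ assumed by \cref{prop:Lascvgce}, Putinar's theorem states that any polynomial strictly positive on $\superSet$ lies in $M_d[\superSet]$ for some finite $d$. Applied to $\hat{p}_\varepsilon$, this yields a level $d \in \naturals$ with $\hat{p}_\varepsilon \in M_d[\superSet]$, hence $\hat{p}_\varepsilon$ is feasible for the level-$d$ SDP approximation of \cref{prob:poly-sep} and contributes the objective value $-c+\varepsilon V$. The infimum of the SDP is therefore strictly negative, which is what was to be shown.

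The main subtlety, and the only step that is not completely mechanical, is the Archimedean condition required for Putinar in the third paragraph: \cref{lemma:pos-polynomial-on-open-ball} supplies only a non-negative polynomial, so the strictly-positive perturbation constructed in the second paragraph is indispensable, and one must additionally know that the chosen description of $\superSet$ satisfies Putinar's hypothesis. In the paper's setting this is routine, as $\feasCoeffsAndFlows$ is compact and, if necessary, a redundant ball constraint $R^2 - \norm{y}^2 \ge 0$ can be appended to its description without altering $\superSet$, ensuring the Archimedean property and letting the argument go through.
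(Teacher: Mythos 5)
Your proof is correct, and it reaches the conclusion by a genuinely different final step than the paper. Both arguments open identically: $\setDiff \neq \emptyset$ plus \cref{prop:open-subset-always-exists} gives an open subset of $\setDiff$, and \cref{lemma:pos-polynomial-on-open-ball} then supplies a polynomial $p \in \polynomialsPosOnSet[\superSet]$ with strictly negative integral over $\uncertaintySet$. Where you diverge is in passing from this nonnegative polynomial to a certificate at a \emph{finite} level of the hierarchy: the paper appeals to the density of SOS polynomials in the cone of nonnegative polynomials (citing Lasserre) together with continuity of the integral, whereas you perturb $p$ to $p + \varepsilon$ with $0 < \varepsilon < c/\mu(\uncertaintySet)$, obtain strict positivity on the compact feasible set, and invoke Putinar's Positivstellensatz to place the perturbed polynomial in $M_d$ for some finite $d$. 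Your route is the more standard and, in my view, the more watertight one: it makes explicit both why the certificate lies in a truncated quadratic module of finite level and why its objective value remains negative, and it isolates exactly the hypothesis needed (the Archimedean condition, which you correctly note can be enforced by a redundant ball constraint since the sets are compact). The paper's density argument buys independence from the Archimedean assumption, since a globally SOS approximant lies in $M_d$ trivially, but it leaves implicit in which norm the approximation takes place and why that norm simultaneously controls the integral and permits approximating a polynomial that is genuinely negative on an open ball. One small imprecision in your write-up: the SDP actually solved is the reformulation of \cref{lemma:infeas-method-remove-proj} over the non-projected set $\feasCoeffsAndFlows$, whose description (unlike that of $\superSet = \proj_u(\feasCoeffsAndFlows)$) is available; you should apply Putinar to $M_d[\feasCoeffsAndFlows]$ rather than $M_d[\superSet]$. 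This is harmless, because $\hat{p}_\varepsilon$, read as a polynomial in $(u,x)$ that is constant in $x$, is strictly positive on $\proj_u(\feasCoeffsAndFlows) \times \reals^{n_2} \supseteq \feasCoeffsAndFlows$ and has vanishing $x$-coefficients, so the identical argument certifies its feasibility for problem (2) of that lemma.
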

\begin{proof}
    By \cref{prop:open-subset-always-exists}, $\setDiff \neq \emptyset$ implies the existence of some open subset in $\setDiff$.
    Then \cref{lemma:pos-polynomial-on-open-ball} guarantees the existence of a polynomial $p$ with strictly negative objective value for the abstract polynomial optimization problem. \\
    Consider then the SDP approximation of \cref{prob:poly-sep}.
    Since SOS-polynomials are dense (see \cite{Lasserre2007}) in the set of non-negative polynomials and by the continuity of the integral, there is always a SOS-polynomial close to the $p$ with a negative objective value.
\end{proof}

\subsection{A Set Containment Approach for Certifying Feasibility}\label{subsec:feas-model}
In general, deciding robust feasibility is equivalent to answering the set containment question
\begin{equation*}
    \uncertaintySet \subseteq \proj_u(\feasCoeffsAndFlows).
\end{equation*}
Since an explicit description of $\proj_u(\feasCoeffsAndFlows)$ is typically not available, we next show how the question above can be decided equivalently using non-projected sets.
The set $\feasCoeffsAndFlows$ of all feasible pairs of uncertain data \(u \in \reals^{n_1}\) and problem solution \(x\in\reals^{n_2}\) can be written naturally as an intersection in the following way:
\begin{equation*}
  \feasCoeffsAndFlows = \underbrace{\{(u, x) \mid \genEqFunc(u, x) =0 \}}_{=:\mathcal{G}} \cap \underbrace{\{(u, x) \mid \genIneqFunc(u, x) \geq0\}}_{=:\mathcal{H}}.
\end{equation*}

In our approach, we require that for all possible realizations of the uncertain data \(u \in \uncertaintySet\), the equation system \(\genEqFunc(u, x)=0\) has a unique solution in \(x\).
Let \(\mathcal{G}_\uncertaintySet = \condset{(u, x)}{u \in \uncertaintySet, \, \genEqFunc(u, x) = 0}\) be the restriction of \(\mathcal{G}\) to the pairs containing elements of the uncertainty set.
\begin{assumption}\label{asmp:G-is-func-graph}%
  For all \(\hat{u} \in \uncertaintySet\), the system \(\genEqFunc(\hat{u}, x) = 0\) has exactly one solution \(\hat{x} \in \reals^{n_2}\).
\end{assumption}
If the previous assumption is satisfied, let \(\genEqSolFunc\colon\uncertaintySet \rightarrow \reals^{n_2}\) be the (unique) function that maps elements of the uncertainty set to solutions. That is,
for all \(\hat{u} \in \mathcal{U}\), let \(\genEqSolFunc(\hat{u}) \in \reals^{n_2}\) be the unique solution to \(\genEqFunc(\hat{u}, x)=0\).
Using the uncertainty-to-solution function \(\genEqSolFunc\), the set \(\mathcal{G}_\uncertaintySet\) can be rephrased as \(\mathcal{G}_\uncertaintySet = \condset{(u, g(u))}{u \in \uncertaintySet}\).

\begin{remark}
    Uniqueness of solutions as in~\cref{asmp:G-is-func-graph} is a feature of many physical systems that are modeled as a partial differential equation (PDE) system.
    For instance, for a wide class of boundary value problems the uniqueness of the solution follows from the famous Lemma of Lax--Milgram~\cite{Lax1954} for arbitrary right-hand sides using a coercivity assumption.
    This directly implies that uniqueness and~\cref{asmp:G-is-func-graph} also hold for PDEs with uncertain coefficients, as long as the coercivity is maintained on the whole uncertainty set.

    The set \(\mathcal{G}_\uncertaintySet\) comprises all uncertainty-dependent solutions of the state equation \(\genEqFunc(u,x)=0\), whereas the set \(\mathcal{H}\) is described by the given state constraints.
    Moreover, we remark that an explicit construction of the function $g(\cdot)$ is never required; we merely introduce \(g\) to simplify the presentation.
\end{remark}

The next lemma shows how the unique dependency between \(u\) and \(x\) leads to an equivalent projection-less formulation of the set containment question \(\uncertaintySet \subseteq \proj_u(\feasCoeffsAndFlows)\).
\begin{lemma}\label{lemma:feas-method-remove-proj}
    Let $\uncertaintySet \subseteq \reals^{n_1}$ and let $\genEqSolFunc\colon\uncertaintySet \rightarrow \reals^{n_2}$.
    Let $\genIneqFunc_i\colon\reals^{n_1}\times\reals^{n_2} \rightarrow \reals$  ($n_1$, $n_2 \in \naturals$) for $i=1,\ldots,k_2$ be functions.
    Let
    \begin{equation*}
        \eqSet_\uncertaintySet := \condset{(u, \genEqSolFunc(u)) \in \reals^{n_1} \times \reals^{n_2}}{u \in \uncertaintySet} \text{ and }
        \inEqSet \subseteq \reals^{n_1} \times \reals^{n_2}.
    \end{equation*}
    Then
    \begin{equation*}
        \uncertaintySet \subseteq \proj_u(\eqSet_\uncertaintySet \cap \inEqSet) \iff \eqSet_\uncertaintySet \subseteq \inEqSet.
    \end{equation*}
\end{lemma}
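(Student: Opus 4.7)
The plan is a short direct double-inclusion argument that exploits the fact that $\eqSet_\uncertaintySet$ is the graph of the \emph{function} $\genEqSolFunc$, so for each $u \in \uncertaintySet$ there is a unique $x$ (namely $\genEqSolFunc(u)$) with $(u,x) \in \eqSet_\uncertaintySet$. I would record this uniqueness as the single fact doing all the work, and then unpack each direction by chasing an element.

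For the direction $\Rightarrow$, I would take an arbitrary $(u, \genEqSolFunc(u)) \in \eqSet_\uncertaintySet$ (so that $u \in \uncertaintySet$), apply the containment hypothesis to obtain some $x$ with $(u, x) \in \eqSet_\uncertaintySet \cap \inEqSet$, and then note that $(u, x) \in \eqSet_\uncertaintySet$ forces $x = \genEqSolFunc(u)$. Thus $(u, \genEqSolFunc(u)) \in \inEqSet$, which is what we need.

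For the direction $\Leftarrow$, I would take any $u \in \uncertaintySet$, observe that $(u, \genEqSolFunc(u)) \in \eqSet_\uncertaintySet$ by definition, use the hypothesis $\eqSet_\uncertaintySet \subseteq \inEqSet$ to get $(u, \genEqSolFunc(u)) \in \inEqSet$, and conclude $(u, \genEqSolFunc(u)) \in \eqSet_\uncertaintySet \cap \inEqSet$. Projecting onto the $u$-coordinate yields $u \in \proj_u(\eqSet_\uncertaintySet \cap \inEqSet)$.

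There is no real obstacle here: the proof is purely set-theoretic once \cref{asmp:G-is-func-graph} is in force. The only subtle point worth calling out explicitly is that the definition of $\eqSet_\uncertaintySet$ as a graph (rather than as the full zero set of $\genEqFunc$) is what rules out a spurious $x \neq \genEqSolFunc(u)$ appearing in the preimage in the $\Rightarrow$ direction; without Assumption~\ref{asmp:G-is-func-graph} the lemma would fail because the projection could be witnessed by an $x$ that disagrees with the canonical solution. I would therefore phrase both directions around the single identity $\eqSet_\uncertaintySet = \{(u, \genEqSolFunc(u)) : u \in \uncertaintySet\}$ and keep the write-up to a few lines.
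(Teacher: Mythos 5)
Your proposal is correct and follows essentially the same route as the paper's proof: both directions are the same element-chasing argument, with the graph property of $\eqSet_\uncertaintySet$ forcing $x = \genEqSolFunc(u)$ in the forward direction and the projection step closing the reverse direction. No gaps.
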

\begin{proof}
    ``$\Rightarrow$'': Suppose $\uncertaintySet \subseteq \proj_u(\eqSet_\uncertaintySet \cap \inEqSet)$.
    Pick any $(u, x) \in \eqSet_\uncertaintySet$.
    Due to the projection, there exists $x'$ with $(u, x') \in  \eqSet_\uncertaintySet \cap \inEqSet$.
    The variable $x$ is uniquely determined for any $u \in \uncertaintySet$.
    Therefore $x = x' = \genEqSolFunc(u)$ holds and thus $(u, x) \in \inEqSet$.

    ``$\Leftarrow$'': Suppose $\eqSet_\uncertaintySet \subseteq \inEqSet$.
    Pick any $u \in \uncertaintySet$ and let $x = \genEqSolFunc(u)$.
    Then $(u, x) \in \eqSet_\uncertaintySet \subseteq \inEqSet$ and thus $(u, x) \in \eqSet_\uncertaintySet \cap \inEqSet$.
    This implies $u \in \proj_u(\eqSet_\uncertaintySet \cap \inEqSet)$.
\end{proof}
This lemma can be applied to all problems where a subset of the constraints defines a unique solution for each possible realization of the data.
Even if $\genEqSolFunc$ is only given implicitly by the solution of some (in-)equality system, the lemma is still applicable.

If \cref{asmp:G-is-func-graph} holds, \cref{lemma:feas-method-remove-proj} allows us to answer the original set containment problem~\eqref{u-proj-b} by deciding the equivalent set containment problem
\begin{equation*}
    \eqSet_\uncertaintySet \subseteq \inEqSet,
\end{equation*}
where \(\eqSet_\uncertaintySet = \condset{(u, x) \in \mathcal{U} \times \reals^{n_2}}{\genEqFunc(u, x) = 0} \) and $\inEqSet = \{(u, x) \in \reals^{n_1} \times \reals^{n_2} \mid \genIneqFunc_1(x) \geq 0,\ldots, \genIneqFunc_{k_2}\}(x) \geq 0\}$.
This set containment problem can then be decided with the optimization problems
\begin{equation}\label{prob:minconstraints}\tag{MinCons}
    \inf_{x \in \eqSet_\uncertaintySet} \genIneqFunc_i(x) \quad i =1,\ldots,k_2.
\end{equation}
The objective values of all $k_2$ optimization problems are non-negative if and only if $\eqSet_\uncertaintySet \subseteq \inEqSet$.
In cases where global optimality cannot be obtained easily, the criterion can be weakened by replacing the optimization problems \cref{prob:minconstraints} with relaxations since non-negative objective values of the relaxations imply non-negative objective values of the original problems.
However, this is only a sufficient criterion since $\eqSet_\uncertaintySet \subseteq \inEqSet$ might hold but at the same time some optimization problems can have negative objective values due to the relaxation.

The next lemma shows how the set containment question can still be decided if the considered sets are partitioned into subsets.
This will be important later when eliminating the absolute values of the gas transport problem.
\begin{lemma}\label{lemma:split-proj-into-subsets}
    Let $\uncertaintySet \subseteq \reals^{n_1}$ and let $\eqSet_\uncertaintySet,\,\inEqSet \subseteq \reals^{n_1}\times\reals^{n_2}$ with $\eqSet_\uncertaintySet = \condset{(u, x) \in \reals^{n_1}\times\reals^{n_2}}{x = \genEqSolFunc(u), \, u \in \uncertaintySet}$ for an arbitrary function $\genEqSolFunc\colon\reals^{n_1}\rightarrow\reals^{n_2}$.
    Let $\mathcal{S}_i$ ($i \in I$) be a collection of sets with $\mathcal{S}_i \subseteq \reals^{n_1}\times\reals^{n_2}$ such that $\bigcup_{i \in I} \mathcal{S}_i = \reals^{n_1}\times\reals^{n_2}$.
    Then
    \begin{align*}
        \uncertaintySet = \proj_u(\eqSet_\uncertaintySet) &\subseteq \proj_u(\eqSet_\uncertaintySet \cap \inEqSet)\\
        &\iff \nonumber\\
        \proj_u(\eqSet_\uncertaintySet \cap \mathcal{S}_i) &\subseteq \proj_u(\eqSet_\uncertaintySet \cap \mathcal{S}_i \cap \inEqSet)\quad\forall i \in I
    \end{align*}
\end{lemma}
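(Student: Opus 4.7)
The plan is to prove both directions directly using the functional-graph structure of $\eqSet_\uncertaintySet$ (every $u \in \uncertaintySet$ determines a unique $x = \genEqSolFunc(u)$), together with the covering assumption $\bigcup_{i\in I} \mathcal{S}_i = \reals^{n_1}\times\reals^{n_2}$. The whole argument is a chase through membership relations; there is no analytic content.

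For the forward direction ($\Rightarrow$), I would fix $i \in I$ and pick $u \in \proj_u(\eqSet_\uncertaintySet \cap \mathcal{S}_i)$, witnessed by some $(u, x) \in \eqSet_\uncertaintySet \cap \mathcal{S}_i$. From $(u,x) \in \eqSet_\uncertaintySet$ I conclude $u \in \uncertaintySet$ and $x = \genEqSolFunc(u)$. The hypothesis then supplies an $x'$ with $(u, x') \in \eqSet_\uncertaintySet \cap \inEqSet$; but $(u, x') \in \eqSet_\uncertaintySet$ forces $x' = \genEqSolFunc(u) = x$, so in fact $(u, x) \in \inEqSet$. Hence $(u, x) \in \eqSet_\uncertaintySet \cap \mathcal{S}_i \cap \inEqSet$, which gives $u \in \proj_u(\eqSet_\uncertaintySet \cap \mathcal{S}_i \cap \inEqSet)$. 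This is the step that genuinely uses the functional-graph assumption (inherited via the definition of $\eqSet_\uncertaintySet$), analogously to the second half of \cref{lemma:feas-method-remove-proj}.

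For the backward direction ($\Leftarrow$), I would pick any $u \in \uncertaintySet$ and set $x := \genEqSolFunc(u)$, so that $(u, x) \in \eqSet_\uncertaintySet$. By the covering property $\bigcup_i \mathcal{S}_i = \reals^{n_1}\times\reals^{n_2}$ there exists an index $i^\ast \in I$ with $(u, x) \in \mathcal{S}_{i^\ast}$, so $u \in \proj_u(\eqSet_\uncertaintySet \cap \mathcal{S}_{i^\ast})$. Applying the assumed containment for this particular $i^\ast$ yields $u \in \proj_u(\eqSet_\uncertaintySet \cap \mathcal{S}_{i^\ast} \cap \inEqSet) \subseteq \proj_u(\eqSet_\uncertaintySet \cap \inEqSet)$, which is what we needed.

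The only thing one might call an obstacle is making sure the functional-graph property is invoked in the right place: in $(\Rightarrow)$ it is what lets us identify the two witnesses $x$ and $x'$, and without it the partitioned statements would only give information about some solution, not the specific $\genEqSolFunc(u)$ that witnesses a failure of $\eqSet_\uncertaintySet \subseteq \inEqSet$. Everything else is set-theoretic bookkeeping, and the covering condition is used exactly once, to pick the index $i^\ast$ in the backward direction.
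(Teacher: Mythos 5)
Your proof is correct. It takes a mildly different route from the paper's: the paper proves the equivalence as a chain, first invoking \cref{lemma:feas-method-remove-proj} to replace $\uncertaintySet \subseteq \proj_u(\eqSet_\uncertaintySet \cap \inEqSet)$ by the unprojected containment $\eqSet_\uncertaintySet \subseteq \inEqSet$, observing that this is equivalent to $\eqSet_\uncertaintySet \cap \mathcal{S}_i \subseteq \inEqSet \cap \mathcal{S}_i$ for all $i$ (the covering is used here), then verifying the set identity $\eqSet_\uncertaintySet \cap \mathcal{S}_i = \eqSet_{\uncertaintySet_i'}$ with $\uncertaintySet_i' = \proj_u(\eqSet_\uncertaintySet \cap \mathcal{S}_i)$ so that \cref{lemma:feas-method-remove-proj} can be applied a second time to restore the projected form. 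You instead argue directly on elements of the projected sets; the step in your forward direction where the two witnesses $x$ and $x'$ are identified via $x = x' = \genEqSolFunc(u)$ is exactly the content of the ``$\Rightarrow$'' direction of \cref{lemma:feas-method-remove-proj}, so in effect you have inlined that lemma rather than cited it. What the paper's version buys is reuse of an already-proved result and an explicit record of the fact that each restricted problem is again of the form covered by \cref{lemma:feas-method-remove-proj} (with $\uncertaintySet$ replaced by $\uncertaintySet_i'$), which is what the later absolute-value elimination relies on; what your version buys is a shorter, self-contained argument. Your placement of the two structural hypotheses is also the right one: the functional-graph property is needed only in the forward direction, the covering only in the backward direction.
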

\begin{proof}\begin{align*}%
        &\uncertaintySet = \proj_u(\eqSet_\uncertaintySet) \subseteq \proj_u(\eqSet_\uncertaintySet \cap \inEqSet) \\
        \overset{{\text{\cref{lemma:feas-method-remove-proj}}}}{\iff}&
        \eqSet_\uncertaintySet \subseteq \inEqSet \iff \eqSet_\uncertaintySet \cap \mathcal{S}_i \subseteq \inEqSet \cap \mathcal{S}_i\quad (\forall i \in I)
    \end{align*}%
    Let $\uncertaintySet'_i := \proj_u(\eqSet_\uncertaintySet \cap \mathcal{S}_i)$.
    Rewriting $\eqSet_\uncertaintySet \cap \mathcal{S}_i$ yields
    \begin{align*}
        \eqSet_\uncertaintySet \cap \mathcal{S}_i &= \condSet{(u, x)}{x = \genEqSolFunc(u),\, u\in\uncertaintySet,\,(u, x) \in \mathcal{S}_i} \\
        &= \condSet{(u, x)}{x = \genEqSolFunc(u),\, u\in\uncertaintySet,\,(u, \genEqSolFunc(u)) \in \mathcal{S}_i} \\
        &= \condSet{(u, x)}{x = \genEqSolFunc(u),\, u\in\condSet{u}{u \in \uncertaintySet,\, (u, \genEqSolFunc(u)) \in \mathcal{S}_i}}\\
        &= \condSet{(u, x)}{x = \genEqSolFunc(u),\, u\in\proj_u(\eqSet_\uncertaintySet \cap \mathcal{S}_i)} \\
        &= \eqSet_{\proj_u(\eqSet_\uncertaintySet \cap S_i)} = \eqSet_{\uncertaintySet'_i}.
    \end{align*}
    Then
    \begin{align*}
         &\eqSet_\uncertaintySet \cap \mathcal{S}_i = \eqSet_{\uncertaintySet'_i} \subseteq \inEqSet \cap \mathcal{S}_i\quad (\forall i \in I) \\
         \overset{{\text{\cref{lemma:feas-method-remove-proj}}}}{\iff}&
         \uncertaintySet'_i = \proj_u(\eqSet_{\uncertaintySet'_i}) \subseteq \proj_u\Of{\eqSet_{\uncertaintySet'_i} \cap \inEqSet} = \proj_u\Of{\eqSet_{\uncertaintySet} \cap \inEqSet \cap \mathcal{S}_i} \quad (\forall i \in I).
    \end{align*}
\end{proof}

For a practical application, the optimization problems \cref{prob:minconstraints} need to be solved to global optimality.
As mentioned earlier, if global optimality cannot be ensured, a relaxation of the given problem can also suffice.
The structure of the optimization problems depends on the defining functions of $\eqSet_\uncertaintySet$, $\inEqSet$.
For the gas network problem, these typically are polynomials or piecewise polynomials.
Using the ideas of \cref{subsec:eliminate-abs}, the piecewise polynomial functions can be reformulation in terms of pure polynomials.
Instead of solving the resulting polynomial optimization problems \cref{prob:minconstraints}, sum of squares or moment relaxation of these problems are used instead.
These relaxations form a hierarchy of semidefinite programs, see \cite{Parrilo2003} and \cite{Lasserre2001}, respectively.

\section{Deciding Robustness for the Passive Gas Network Problem}\label{sec:deciding-robustness-of-gas}
In this section, the passive gas network problem under uncertainty is introduced.
It also contains crucial properties of the problem class as well as techniques for reduction of variables and procedures to eliminate the occurring absolute value functions.
Combined, these ideas allow a compact problem formulation as a polynomial feasibility system which will can be tackled using methods from \cref{sec:robust-problems}.

\subsection{The Passive Gas Network Problem}\label{subsec:passive-gas-net}
We consider a stationary passive gas network with horizontal pipes.
Gas can be inserted or withdrawn at each node of the network.
The goal is to decide whether a given set of demands can be satisfied by the network.
Even in the absence of uncertainties, this problem is challenging to solve since the resulting feasibility problem is in general nonlinear, \nonsmooth and \nonconvex.

\subsubsection{\Modeling the Nominal Passive Gas Network Feasibility Problem}
The network's topology is given by a weakly connected digraph $\graph=(\nodes^+, \arcs)$ with $\abs{\nodes^+} = \abs{\{0,\ldots,n+1\}} = n+1$ nodes and $\abs{\arcs} =m \geq n$ arcs.
The physical state of the network is represented by the (non-negative)  pressure $\pressureAtNode[\node] \in \realsPos$ at each node $\node \in \nodes$ and the flow $\massflowAtArc[\arc] \in \reals$ along each arc $\arc \in \arcs$.
Concerning the flow, a positive sign of $\massflowAtArc[\arc]$ indicates flow in edge direction, a negative sign the reverse.
Since the pressure only occurs in squared form, we introduce variables $\pressureAtNode[\node]^2 = \pressSqrAtNode[\node] \in \realsPos$  for the squared pressures, see \cref{eq:pressure-loss}.
Due to physical, technical and legal reasons, the squared pressures are bounded: $\pressSqrAtNode[\node] \in [\lbPressSqrAtNode[\node], \ubPressSqrAtNode[\node]]$, $\node \in \nodes^+$.
For a more comprehensive treatment of the gas transport problem, see e.g. \cite{Koch2015}.
A general survey on the problems arising in gas network operations is given in \cite{Rios-Mercado2015}.

Gas networks share a basic property with linear flow networks: at each node, flow conservation must hold.
Similar to the linear case, gas may be inserted or withdrawn at each node of the network.
This so called demand or \emph{nomination} is encoded in the vector $(\supplyAtNode[\node])_{\node \in \nodes^+}$ which has to be balanced: $\sum_{\node \in \nodes^+} \supplyAtNode[\node] = 0$.
Insertion is indicated by a positive sign, withdrawal by a negative sign of $\supplyAtNode[\node]$.
Flow conservation can then be stated as
\begin{equation}\label{eq:flow-cons}
    \sum_{\arc=\Arc{\node}{\otherNode} \in \arcs} \massflowAtArc[\arc] - \sum_{\arc=\Arc{\otherNode}{\node} \in \arcs} \massflowAtArc[\arc] = \supplyAtNode[\node], \quad \forall \node \in \nodes^+.
\end{equation}
So far, the model is identical to a regular linear network flow problem.
More complexity in the form of nonlinear constraints is introduced once the physical laws of gas transport are considered.

According to the Weymouth Equation \cref{eq:pressure-loss} (see \cite{Weymouth1912}), when gas flows through a pipe, its pressure decreases.
The difference of the squared pressures at both ends of the pipe is proportional to the signed squared flow along the pipe.
The magnitude of the pressure drop is influenced by the pipe's pressure loss factor  $\plossCoeffAtArc[\arc]$, which (amongst other factors) depends upon the length, diameter and roughness of the pipe.
A more in depth look at the different \modeling approaches for the pressure loss factor can be found in \cite{Koch2015} and \cite{Pfetsch2015}.

By defining $f(x) := x\abs{x}$, the pressure loss relation can be expressed as
\begin{equation}\label{eq:pressure-loss}
    \pressSqrAtNode[\node] - \pressSqrAtNode[\otherNode] = \plossCoeffAtArc[\arc] \abs{\massflowAtArc[\arc]}\massflowAtArc[\arc] = \plossCoeffAtArc[\arc] f(\massflowAtArc[\arc]), \quad \forall \arc=\Arc{\node}{\otherNode} \in \arcs.
\end{equation}

Let $\nodeArcIncidentFull \in \reals^{\abs{\nodes^+} \times \abs{\arcs}}$ be the node-arc-incidence matrix of $\graph$, that is $(\nodeArcIncidentFull)_{\arc \node} = +1$ and $(\nodeArcIncidentFull)_{\arc \otherNode} =-1$ for \mbox{$\arc=\Arc{\node}{\otherNode} \in \arcs$}.
With $\nodeArcIncidentFull$, the flow conservation \cref{eq:flow-cons} can be stated in a more compact manner:
\begin{equation}\label{eq:flow-cons-compact}
    \nodeArcIncidentFull \massflow = \fullSupplyVec.
\end{equation}
By defining $\PlossCoeff[\plossCoeff] := \diag\Of{\plossCoeffAtArc[1], \dots, \plossCoeffAtArc[{\abs{\arcs}}]}$ and $F(\massflow):= \transpose{((f(\massflowAtArc[1]), \dots, f(\massflowAtArc[{\abs{\arcs}}]))}$, the pressure loss constraints \cref{eq:pressure-loss} can be combined to
\begin{equation}\label{eq:pressure-loss-compact}
    \transpose{\nodeArcIncidentFull} \pressSqr = - \PlossCoeff[\plossCoeff] F(\massflow).
\end{equation}
With \cref{eq:flow-cons-compact,eq:pressure-loss-compact}, the feasibility problem can be stated as a potential driven network problem
\begin{equation}\tag{PotN}\label{model:potn}
    \begin{aligned}
        \nodeArcIncidentFull \massflow  &= \fullSupplyVec,\\
        \transpose{\nodeArcIncidentFull} \pressSqr &= - \PlossCoeff[\plossCoeff] F(\massflow),  \\
        \pressSqr &\in [\lb{\pressSqr}, \ub{\pressSqr}],\\
        \massflow &\in \reals^{\abs{\arcs}}.
    \end{aligned}
\end{equation}

\subsubsection{Reduction of Variables}\label{subsubsecn:reduction-of-variables}
By a result of \cite{Gotzes2016}, all pressure variables and $\abs{\nodes^+}-1$ of the flow variables can be eliminated from the system.
It is well known that for connected graphs, $\nodeArcIncidentFull$ has rank $\abs{\nodes^+}-1$ and an arbitrary row can be removed while preserving the set of solutions of \cref{eq:flow-cons}.
For ease of notation, we discard the row corresponding to node $0$ and obtain $\nodeArcIncident$ from $\nodeArcIncidentFull$ in this way.
The set $\nodes = \nodes^+ \minus \{0\} = \{1,\ldots,n\}$ of nodes and the demand vector $\fullSupplyVec$ are adjusted accordingly.

\begin{theorem}[\cite{Gotzes2016}]
    Let $\nodeArcIncident$ be the node-arc-incidence-matrix of a graph $\graph$ as described above and let $\nodeArcIncident = (\nodeArcIncidentBasis, \nodeArcIncidentNonBasis)$ be partition into basis and non basis submatrices of $\nodeArcIncident$.
    Let $(\PlossCoeffBasis[\plossCoeff]$, $\PlossCoeffNonBasis[\plossCoeff])$, $(F_B$, $F_N)$, and $(\massflowBasis$, $\massflowNonBasis)$ be the corresponding partitions of $\PlossCoeff[\plossCoeff]$, $F$, and $\massflow$, respectively.
    Define
    \begin{equation*}\label{eq:def-agg-ploss-g}
        g\colon\reals^{|\arcs|}\times\reals^{\abs{N}} \rightarrow \reals^{|\nodes|}, \quad g(\plossCoeffBasis, \massflowNonBasis) := \OF{\nodeArcIncidentBasisTransposed}^{-1} \PlossCoeffBasis[\plossCoeff] F_B\OF{\nodeArcIncidentBasisInverse \OF{\supplyVec -\nodeArcIncidentNonBasis \massflowNonBasis} }.
    \end{equation*}

    Then the model \cref{model:potn} is equivalent to the following reduced model in variables $\massflowNonBasis$:
    \begin{equation}\tag{RPotN}\label{model:rpotn}
      \begin{aligned}
          \nodeArcIncidentNonBasisTransposed g(\plossCoeff, \massflowNonBasis) &= \PlossCoeffNonBasis[\plossCoeff] F_N(\massflowNonBasis) \\
          \lbPressSqrAtNode[0] &\leq \min_{i=1,\ldots,n}\left[\ubPressSqrAtNode[i] + g_i(\plossCoeff, \massflowNonBasis)\right] \\
          \ubPressSqrAtNode[0] &\geq \max_{i=1,\ldots,n}\left[\lbPressSqrAtNode[i] + g_i(\plossCoeff, \massflowNonBasis)\right] \\
          \min_{i=1,\ldots,n}\left[\ubPressSqrAtNode[i] + g_i(\plossCoeff, \massflowNonBasis)\right] &\geq \max_{i=1,\ldots,n}\left[\lbPressSqrAtNode[i] + g_i(\plossCoeff, \massflowNonBasis)\right] \\
          \massflowNonBasis &\in \reals^{\abs{N}},
      \end{aligned}
    \end{equation}
    where $\lbPressSqrAtNode[0]$, $\ubPressSqrAtNode[0]$ are the squared pressure bounds at the root node, respectively.\\
    If a feasible $\massflowNonBasis$ for \cref{model:rpotn} exists, the remaining variables $\massflowBasis, \pressSqr$ can be recovered through $\massflowBasis = \nodeArcIncidentBasisInverse \OF{ \supplyVec - \nodeArcIncidentNonBasis  \massflowNonBasis }$ and $\pressSqrAtNode[i] = \pressSqrAtNode[0] - g_i(\plossCoeff, \massflowNonBasis)$ ($i=1,\dots,n$).
    The value of $\pressSqrAtNode[0]$ is an arbitrary given element of
    \begin{equation*}
        \left[
            \max_{i=1,\dots,n}[\lbPressSqrAtNode[i] + g_i(\plossCoeff, \massflowNonBasis)],
            \min_{i=1,\dots,n}[\ubPressSqrAtNode[i] + g_i(\plossCoeff, \massflowNonBasis)]
        \right].
    \end{equation*}
    Conversely, a vector $\massflowNonBasis$ that was extracted from a solution $\optimal{\massflow}$, $\optimal{\pressSqr}$ of \cref{model:potn} is feasible for \cref{model:rpotn}.
\end{theorem}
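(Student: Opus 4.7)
The plan is to prove the equivalence of \cref{model:potn} and \cref{model:rpotn} by eliminating both $\massflow$ and $\pressSqr$ in favor of the reduced vector $\massflowNonBasis$ together with the scalar root pressure $\pressSqrAtNode[0]$, and then showing that the resulting restrictions on $\pressSqrAtNode[0]$ collapse precisely into the three min-max conditions of the reduced model. First, since $\graph$ is connected, $\nodeArcIncidentFull$ has rank $|\nodes^+|-1$ and dropping the row for node~$0$ preserves the solution set of $\nodeArcIncidentFull\massflow = \fullSupplyVec$ (balance of supplies makes the root row redundant), leaving $\nodeArcIncident\massflow = \supplyVec$. For a basis $\nodeArcIncidentBasis$ corresponding to a spanning tree of $\graph$, $\nodeArcIncidentBasis$ is invertible (a standard fact for incidence matrices of connected graphs restricted to spanning trees), which immediately yields $\massflowBasis = \nodeArcIncidentBasisInverse\OF{\supplyVec - \nodeArcIncidentNonBasis \massflowNonBasis}$, eliminating $\massflowBasis$ as a function of $\massflowNonBasis$.

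Next I would reduce the pressure variables using the column-sum-zero property of $\nodeArcIncidentFull$. Because $\transpose{\nodeArcIncidentFull}\,\mathbf{1}=0$, the substitution $\pressSqr \mapsto \pressSqr - \pressSqrAtNode[0]\,\mathbf{1}$ leaves the pressure-loss relation~\cref{eq:pressure-loss-compact} unchanged and decouples $\pressSqrAtNode[0]$ from the remaining pressure unknowns. Solving the basis part of the resulting system explicitly using $\nodeArcIncidentBasisInverse$ together with the formula for $\massflowBasis$ from the first step yields
\begin{equation*}
  \pressSqrAtNode[i] - \pressSqrAtNode[0] = -g_i(\plossCoeff,\massflowNonBasis), \quad i=1,\ldots,n,
\end{equation*}
with $g$ exactly as defined in the statement. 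Substituting this identity into the non-basis part of~\cref{eq:pressure-loss-compact} produces $\nodeArcIncidentNonBasisTransposed g(\plossCoeff,\massflowNonBasis) = \PlossCoeffNonBasis F_N(\massflowNonBasis)$, the first equation of \cref{model:rpotn}.

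It remains to encode the pressure bounds. Via the recovery formula above, $\pressSqrAtNode[i]\in[\lbPressSqrAtNode[i],\ubPressSqrAtNode[i]]$ translates to $\pressSqrAtNode[0]\in[\lbPressSqrAtNode[i]+g_i,\,\ubPressSqrAtNode[i]+g_i]$ for $i=1,\ldots,n$, combined with the direct bound $\pressSqrAtNode[0]\in[\lbPressSqrAtNode[0],\ubPressSqrAtNode[0]]$. Existence of a common admissible $\pressSqrAtNode[0]$ is equivalent to the intersection of these $n+1$ intervals being nonempty, which splits into the three stated min-max inequalities by distinguishing whether the extremizing index is the root or a non-root node. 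Both directions of the theorem then follow: a solution of \cref{model:potn} yields a feasible $\massflowNonBasis$ for \cref{model:rpotn} by direct computation, while a feasible $\massflowNonBasis$ for \cref{model:rpotn} determines $\massflowBasis$ via the spanning-tree formula, $\pressSqrAtNode[0]$ from the stated admissible interval, and then $\pressSqr$ via the recovery identity.

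The main obstacle I anticipate is the pressure reduction, specifically the use of the column-sum-zero property of $\nodeArcIncidentFull$ to decouple $\pressSqrAtNode[0]$ from the basis pressure-loss equations and reveal it as a free parameter. Once this structural observation is in hand, the elimination reduces to straightforward linear algebra exploiting invertibility of the spanning-tree submatrix, and the pressure-bound translation reduces to interval arithmetic; no nonlinear machinery beyond the definition of $g$ is required.
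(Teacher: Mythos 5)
Your derivation is correct: the spanning-tree basis gives invertibility of $\nodeArcIncidentBasis$, the column-sum-zero identity $\transpose{\nodeArcIncidentFull}\mathbf{1}=0$ lets you rewrite the pressure-loss system in terms of $\pressSqr_i-\pressSqrAtNode[0]$ and solve the basis block for $g$, and the nonempty-intersection condition on the $n+1$ intervals for $\pressSqrAtNode[0]$ yields exactly the three min-max inequalities. The paper itself gives no proof here — the theorem is imported verbatim from the cited reference — and your elimination argument is the standard one used there, so there is nothing to flag.
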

Depending on the situation, it can be beneficial to consider the reduced problem \cref{model:rpotn} or the original problem \cref{model:potn}.
For that purpose, let
\begin{align*}
    \flowExtensionMap &\colon \reals^{\abs{N}} \to \reals^{\abs{A}}, \\
    \flowExtensionMap_\arc(\massflowNonBasis) &:=
        \begin{cases}
            \OF{\nodeArcIncidentBasisInverse \OF{ \supplyVec - \nodeArcIncidentNonBasis  \massflowNonBasis }}_\arc, & \text{ if } \arc \in B,\\
            \OF{\massflowNonBasis}_\arc , & \text{ if } \arc \in N.
        \end{cases}
\end{align*}
This affine linear function maps cycle flow values to flows on all arcs of the graph.
For graphs with a single cycle, $\flowExtensionMap$ can be simplified to $\flowExtensionMap_\arc(\massflowNonBasis) = \massflowNonBasis - \beta_\arc$ for some $\beta_\arc \in \reals$.

\subsubsection{Uniqueness of Flow}
Another important result in this context concerns the structure of the feasible set of \cref{model:potn}.
As shown in \cite{Collins1978, Rios-Mercado2002}, the feasible flow of a given demand scenario for a network without pressure bounds is uniquely determined.
\begin{theorem}[\cite{Collins1978}]\label{thm:unique-flow}
    Consider \eqref{model:potn} without pressure bounds.
    Then for fixed $\plossCoeff \in \realsStrictPos^{\abs{\arcs}}$, the solution space has the following properties:
    \begin{enumerate}
        \item The projection on the flow variable $\massflow$ contains a single point, i.e. the flow is unique.
        \item The projection on the squared pressure variable $\pressSqr$ has the form
        \begin{equation*}
            \condSet{\optimal{\pressSqr} + \eta\transpose{(1, \dots, 1)}}{\eta \in \reals}.
        \end{equation*}
    \end{enumerate}
\end{theorem}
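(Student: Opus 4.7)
The plan is to exploit two complementary linear-algebra facts together with the strict monotonicity of $f(x)=x\abs{x}$: cycle flows (elements of $\ker(\nodeArcIncidentFull)$) are orthogonal to potential differences (elements of the range of $\transpose{\nodeArcIncidentFull}$), and for the weakly connected graph $\graph$ the kernel of $\transpose{\nodeArcIncidentFull}$ is exactly the one-dimensional subspace spanned by $\transpose{(1,\dots,1)}$.

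For part~1, I would take two feasible pairs $(\massflow^1,\pressSqr^1)$ and $(\massflow^2,\pressSqr^2)$ of \eqref{model:potn} with the pressure bounds dropped and set $z := \massflow^2-\massflow^1$. Flow conservation gives $\nodeArcIncidentFull z = \fullSupplyVec-\fullSupplyVec = 0$. Subtracting the two pressure-loss equations and taking the inner product with $z$ yields
\begin{equation*}
  0 \;=\; \langle \nodeArcIncidentFull z,\; \pressSqr^2-\pressSqr^1 \rangle \;=\; \langle z,\; \transpose{\nodeArcIncidentFull}(\pressSqr^2-\pressSqr^1) \rangle \;=\; -\sum_{\arc\in\arcs} \plossCoeffAtArc[\arc]\, z_\arc\,\bigl(f(\massflowAtArc[\arc]^2)-f(\massflowAtArc[\arc]^1)\bigr).
\end{equation*}
Strict monotonicity of $f$ gives $z_\arc(f(\massflowAtArc[\arc]^2)-f(\massflowAtArc[\arc]^1)) \geq 0$ with equality if and only if $\massflowAtArc[\arc]^1 = \massflowAtArc[\arc]^2$; since $\plossCoeffAtArc[\arc]>0$, every summand is forced to vanish, hence $z=0$ and the flow is unique.

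For part~2, once the unique flow $\optimal{\massflow}$ is fixed, the pressure-loss relation collapses to the linear system $\transpose{\nodeArcIncidentFull}\pressSqr = -\PlossCoeff[\plossCoeff]F(\optimal{\massflow})$, which has a fixed right-hand side. Its solution set is therefore the affine subspace $\optimal{\pressSqr} + \ker(\transpose{\nodeArcIncidentFull})$ for any particular solution $\optimal{\pressSqr}$. Because $\graph$ is weakly connected, $\nodeArcIncidentFull$ has rank $\abs{\nodes^+}-1$ and $\ker(\transpose{\nodeArcIncidentFull}) = \{\eta\transpose{(1,\dots,1)} : \eta\in\reals\}$, giving exactly the form claimed.

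The main subtlety is the strict-monotonicity step: even though $f'(0)=0$, we still have $z_\arc(f(\massflowAtArc[\arc]^2)-f(\massflowAtArc[\arc]^1))>0$ whenever $z_\arc\neq 0$, and this must be argued directly from the sign pattern of $x\mapsto x\abs{x}$ rather than from a positive lower bound on the derivative. Everything else is a straightforward application of the orthogonality between cycle flows and pressure differences, together with the connectedness of $\graph$.
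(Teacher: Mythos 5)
The paper does not prove this theorem at all --- it is imported verbatim from the cited reference \cite{Collins1978} --- so there is no in-paper proof to compare against. Your argument is the standard one for potential-driven networks with a strictly monotone pressure-loss law, and it is correct as far as it goes: the orthogonality $\langle z, \transpose{\nodeArcIncidentFull}(\pressSqr^2-\pressSqr^1)\rangle = \langle \nodeArcIncidentFull z, \pressSqr^2-\pressSqr^1\rangle = 0$ combined with the strict monotonicity of $f(x)=x\abs{x}$ (namely $(x-y)(f(x)-f(y))>0$ for $x\neq y$, which as you say is a sign argument, not a derivative bound) forces $z=0$; and for a weakly connected graph $\ker(\transpose{\nodeArcIncidentFull})$ is indeed $\spn\{\transpose{(1,\dots,1)}\}$ if that macro existed --- spanned by the all-ones vector --- which gives the claimed one-parameter family of squared pressures.

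The one genuine gap is existence. The statement ``the projection on the flow variable contains a single point'' asserts that the solution space is nonempty, and your two-solutions argument only delivers \emph{at most} one flow; likewise part~2 presupposes a particular solution $\optimal{\pressSqr}$ of the linear pressure system. Existence needs a separate step: since the demand is balanced ($\sum_{\node\in\nodes^+}\supplyAtNode[\node]=0$) and the graph is connected, the affine set $\{\massflow : \nodeArcIncidentFull\massflow=\fullSupplyVec\}$ is nonempty, and minimizing the strictly convex, coercive energy $\sum_{\arc\in\arcs}\plossCoeffAtArc[\arc]\abs{\massflowAtArc[\arc]}^3/3$ over it yields a minimizer whose Lagrange multipliers for the flow-conservation constraints are exactly the squared pressures, i.e.\ the optimality conditions reproduce $\transpose{\nodeArcIncidentFull}\pressSqr=-\PlossCoeff[\plossCoeff]F(\massflow)$. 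Solvability of that pressure system also deserves a remark: the right-hand side $-\PlossCoeff[\plossCoeff]F(\optimal{\massflow})$ must lie in the range of $\transpose{\nodeArcIncidentFull}$, which is not automatic for an arbitrary vector but follows from the variational characterization (or, equivalently, from the cycle condition that the pressure losses sum to zero around every cycle at the unique flow). With that existence step added, your proof is complete and self-contained, which is arguably more than the paper itself provides.
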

In case of pressure bounds, the variable $\eta$ is constrained:
\begin{gather*}
    \eta \in [\lb{\eta}, \ub{\eta}]
    \quad\text{ with } \quad
    \lb{\eta} := \max_{\node\in\nodes}\Of{\lbPressSqrAtNode[\node] - \optimal{\pressSqrAtNode[\node]}} \quad \text{and} \quad
    \ub{\eta} := \min_{\node\in\nodes}\Of{\ubPressSqrAtNode[\node] - \optimal{\pressSqrAtNode[\node]}}.
\end{gather*}
As a simple consequence, if the pressure of a feasible problem is fixed at any node, the pressure values at the remaining network nodes are also uniquely determined.

\subsubsection{The Passive Gas Network Problem Under Uncertainty}
Based on this nominal setting \cref{model:potn,model:rpotn}, uncertainty is introduced into the problem.
Disregarding any combinatorial uncertainties (e.g. random failing of arcs), two possible sources of uncertainty are present in the given model: fluctuations in the demand $\supplyAtNode[\node]$ and variations of the pressure loss factor $\plossCoeffAtArc[\arc]$.
In this paper, we focus on uncertainties in the pressure loss coefficient.
The value of $\plossCoeffAtArc[\arc]$ is influenced by specific chemical properties of the gas as well as physical parameters of the pipe like e.g. its length, diameter and roughness.
In particular, the roughness value of the pipe's wall changes during the network's operation due to aging effects and accumulation of dirt.
It is difficult to measure this parameter after the network begins operation.
Since the roughness values can only be estimated, a robust treatment of the problem is reasonable.
The goal of robust optimization is to immunize solutions of an optimization problem against a set of parameters which can be realized from a given uncertainty set.
The problem is required to be solvable for all possible realization of the uncertainty.

It is assumed that the pressure loss factor of each pipe is strictly positive and lies within some a-priori known interval
\begin{equation*}
    \plossCoeffAtArc[\arc] \in [\lbPlossCoeffAtArc[\arc], \ubPlossCoeffAtArc[\arc]] \subseteq \realsStrictPos \quad \forall \arc \in \arcs
\end{equation*}
with $0 < \lbPlossCoeffAtArc[\arc] \leq \ubPlossCoeffAtArc[\arc]$.
Furthermore, possible correlation between different pipes is ignored.
The resulting uncertainty set $\uncertaintySet$ is therefore given by the hyperrectangle
\begin{align*}
    \uncertaintySet := \times_{\arc \in \arcs} [\lbPlossCoeffAtArc[\arc], \ubPlossCoeffAtArc[\arc]].
\end{align*}

By \cref{thm:unique-flow}, a problem without pressure bounds always admits a uniquely determined flow that satisfies the given demand.
Parameterizing this result by the pressure loss factors motivates the following corollary:
\begin{corollary}\label{cor:unique-flow}
    For networks without pressure bounds, there exists a function
    \begin{align*}
        \tilde{\massflow} \colon \realsStrictPos^{\abs{\arcs}} &\rightarrow \reals^{\abs{N}}\\
        \plossCoeff &\mapsto \tilde{\massflow}(\plossCoeff)
    \end{align*}
    that solves
    \begin{equation*}
        \nodeArcIncidentNonBasisTransposed g(\plossCoeff, \tilde{\massflow}(\plossCoeff)) = \PlossCoeffNonBasis[\plossCoeff] F_N(\tilde{\massflow}(\plossCoeff))
    \end{equation*}
    for all $\plossCoeff \in \realsStrictPos^{\abs{\arcs}}$.
\end{corollary}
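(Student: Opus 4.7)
The plan is to obtain $\tilde{\massflow}$ pointwise, by invoking \cref{thm:unique-flow} once for every value of $\plossCoeff \in \realsStrictPos^{|\arcs|}$. Concretely, I would fix an arbitrary $\plossCoeff$ and consider the unbounded instance of \cref{model:potn} with coefficient vector $\plossCoeff$. \cref{thm:unique-flow} asserts that this instance has a nonempty solution space and that its projection onto the $\massflow$ variables is a single point $\optimal{\massflow}(\plossCoeff)$. Thus for each $\plossCoeff$ there exists a unique feasible flow.

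Next I would transfer this uniqueness from the $(\massflow,\pressSqr)$-formulation to the reduced $\massflowNonBasis$-formulation. The Gotzes equivalence between \cref{model:potn} and \cref{model:rpotn} shows that any feasible $\massflow$ of \cref{model:potn} restricts to a feasible $\massflowNonBasis := \optimal{\massflow}_{|N}$ of the reduced system $\nodeArcIncidentNonBasisTransposed g(\plossCoeff,\massflowNonBasis) = \PlossCoeffNonBasis[\plossCoeff] F_N(\massflowNonBasis)$, and conversely any feasible $\massflowNonBasis$ of the reduced system extends to a feasible $\massflow$ of \cref{model:potn} through $\massflowBasis = \nodeArcIncidentBasisInverse(\supplyVec - \nodeArcIncidentNonBasis \massflowNonBasis)$. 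In particular, the projection $\massflow \mapsto \massflowNonBasis$ restricted to feasible flows is a bijection, so the uniqueness in the $\massflow$-variables transports directly to uniqueness of some $\optimal{\massflowNonBasis}(\plossCoeff)$ solving the reduced equation. Note that the pressure-bound constraints in \cref{model:rpotn} play no role here, since we are working in the unbounded setting.

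Finally I would define
\begin{equation*}
    \tilde{\massflow}(\plossCoeff) := \optimal{\massflowNonBasis}(\plossCoeff), \qquad \plossCoeff \in \realsStrictPos^{|\arcs|}.
\end{equation*}
This is a well-defined map from $\realsStrictPos^{|\arcs|}$ to $\reals^{|N|}$ precisely because, for each $\plossCoeff$, the value $\optimal{\massflowNonBasis}(\plossCoeff)$ exists (from existence in \cref{thm:unique-flow}) and is unique (from the flow-uniqueness in \cref{thm:unique-flow} combined with the bijection above). By construction $\tilde{\massflow}(\plossCoeff)$ solves the required reduced equation for every admissible $\plossCoeff$.

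There is no real obstacle here: the statement is essentially a parameterized restatement of \cref{thm:unique-flow}. The only subtle point is to observe that Collins' uniqueness result is stated for fixed $\plossCoeff$ but makes no continuity or regularity assumption in $\plossCoeff$, so the corollary asserts the existence of the function $\tilde{\massflow}$ but not its continuity or smoothness. This is sufficient for the later use in the paper, which only requires that the unique dependence of $\massflowNonBasis$ on $\plossCoeff$ exists as a function, matching the role of $g$ in \cref{asmp:G-is-func-graph}.
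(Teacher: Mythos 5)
Your proposal is correct and matches the paper's (implicit) argument: the corollary is presented as an immediate parameterization of \cref{thm:unique-flow} over $\plossCoeff$, combined with the Gotzes equivalence between \cref{model:potn} and \cref{model:rpotn}, which is exactly what you do. Your remark that only existence of the function (not continuity) is claimed is also consistent with how the corollary is used later.
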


\subsubsection{Deciding Robustness of the Gas Network Problem}\label{sec:gas-definitions}
Using model \cref{model:rpotn}, let
\begin{align*}
    \eqSet &:= \condSet{(\plossCoeff, \massflowNonBasis) \in \realsStrictPos^{\abs{\arcs}} \times \reals^{\abs{N}}}{\nodeArcIncidentNonBasisTransposed g(\plossCoeff, \massflowNonBasis) - \PlossCoeffNonBasis[\plossCoeff] F(\massflowNonBasis) = 0}, \\
    \eqSet_\uncertaintySet &:= \condSet{(\plossCoeff, \massflowNonBasis) \in \eqSet}{\plossCoeff \in \uncertaintySet}
\end{align*}
and
\begin{align*}
    \inEqSet :=  \condSet{(\plossCoeff, \massflowNonBasis) }{%
        \begin{aligned}
             - \lbPressSqrAtNode[0] + \ubPressSqrAtNode[i] + g_i(\plossCoeffBasis, \massflowNonBasis) \geq 0,&&  i &\in \{1,\ldots,n\}\\
               \ubPressSqrAtNode[0] - \lbPressSqrAtNode[i] - g_i(\plossCoeffBasis, \massflowNonBasis) \geq 0,&&  i &\in \{1,\ldots,n\}\\
               \ubPressSqrAtNode[i] + g_i(\plossCoeffBasis, \massflowNonBasis) - \lbPressSqrAtNode[j] - g_j(\plossCoeffBasis, \massflowNonBasis) \geq 0, && i,j &\in \{1,\ldots,n\} \\
        (\plossCoeff, \massflowNonBasis) \in \realsStrictPos^{\abs{\arcs}} \times \reals^{\abs{N}}&&&
        \end{aligned}
    }.
\end{align*}
The set $\eqSet$ (resp. $\eqSet_\uncertaintySet$) contains all feasible combinations $\plossCoeff, \massflowNonBasis$ (resp. with $\plossCoeff \in \uncertaintySet$) arising from the cycle flow equations.
Due to \cref{cor:unique-flow}, this set can be stated equivalently as the graph of $\massflow(\plossCoeff)$.
On the other hand, $\inEqSet$ can be seen as all combinations $\plossCoeff, \massflowNonBasis$ that are feasible for the given pressure bounds.

Combining both $\eqSet$ and $\inEqSet$, let
\begin{equation*}
    \feasCoeffsAndFlows := \eqSet \cap \inEqSet
\end{equation*}
be the set of all feasible uncertainty/solution pairs of the given gas transport problem.

The task is now to decide whether the network allows a feasible flow for all $\plossCoeff \in \uncertaintySet$.\\
Let $\proj_\plossCoeff(\feasCoeffsAndFlows)$ be the projection of the feasible pairs of pressure loss coefficients and flows onto the space of the uncertainty set.
This set contains all pressure loss coefficients which admit a feasible flow in the corresponding problem.
In this context, deciding robustness with respect to $\uncertaintySet$ is equivalent to checking whether the uncertainty set $\uncertaintySet$ is contained in the projection $\proj_\plossCoeff(\feasCoeffsAndFlows)$:
\begin{equation*}
    \uncertaintySet \subseteq \proj_\plossCoeff(\feasCoeffsAndFlows) = \condSet{\plossCoeff}{\exists\, \massflowNonBasis\colon (\plossCoeff, \massflowNonBasis) \in \feasCoeffsAndFlows}.
\end{equation*}

\subsection{Deciding Robust Feasibility on Tree Networks}\label{subsec:robust-problems-tree}
Consider a network whose underlying topology is a tree, i.e. a connected, cycle-free graph.
Since there are no cycles and therefore $N = \emptyset$, the description of the feasible set $\feasCoeffsAndFlows$ does not contain any cycle flow variables $\massflowNonBasis$.
Since there are no flow variables, the function $g(\plossCoeff, \massflowNonBasis)$ as defined in \cref{eq:def-agg-ploss-g} is reduced to a function of the form
\begin{equation*}
    g(\plossCoeff) = \OF{\nodeArcIncidentBasisTransposed}^{-1} \PlossCoeffBasis[\plossCoeff] F_B(\nodeArcIncidentBasisInverse \supplyVec).
\end{equation*}
From this description, we can see that $g(\plossCoeff)$ is a linear function of $\plossCoeff$.
Note that $F_B(\nodeArcIncidentBasisInverse \supplyVec)$ is a constant expression that can be calculated in advance.

With $N = \emptyset$, the set
\begin{equation}\label{eq:tree-feas-phi}
    \feasCoeffsAndFlows =
    \condSet{
    \plossCoeff \in \realsPos{\abs{\arcs}}
    }{
    \begin{aligned}
        \lbPressSqrAtNode[0] &\leq \min_{i=1,\ldots,n}\left[\ubPressSqrAtNode[i] + g_i( \plossCoeff)\right] \\
        \ubPressSqrAtNode[0] &\geq \max_{i=1,\ldots,n}\left[\lbPressSqrAtNode[i] + g_i( \plossCoeff)\right] \\
        \min_{i=1,\ldots,n}\left[\ubPressSqrAtNode[i] + g_i( \plossCoeff)\right] &\geq \max_{i=1,\ldots,n}\left[\lbPressSqrAtNode[i] + g_i( \plossCoeff)\right] \\
    \end{aligned}
    }
\end{equation}
is polyhedral since all $g_i(\plossCoeff)$ are linear and the $\min$ / $\max$ expressions can be replaced by a finite number of linear constraints.

In this case, checking robust feasibility with respect to a given polyhedral uncertainty set \uncertaintySet is equivalent to deciding the set containment problem
\begin{equation*}
    \uncertaintySet \subseteq \proj_\plossCoeff(\feasCoeffsAndFlows) = \feasCoeffsAndFlows
\end{equation*}
for two polyhedra $\uncertaintySet$ and $\feasCoeffsAndFlows$.
As the following lemma by \cite{Mangasarian2002} shows, this can be done efficiently with LP duality:
\newcommand{\fstMatrix}{\TRRNotation{S}\xspace}
\newcommand{\sndMatrix}{\TRRNotation{T}\xspace}
\newcommand{\thrdMatrix}{\TRRNotation{W}\xspace}
\newcommand{\fstRhs}{\vektor{s}\xspace}
\newcommand{\sndRhs}{\vektor{t}\xspace}
\begin{lemma}[\cite{Mangasarian2002}]\label{lemma:polyhedral-set-containmet}
    Let the set $\subSet := \{x \mid \fstMatrix x \geq \fstRhs\}$ and let $\superSet := {\{x \mid \sndMatrix x  \leq \sndRhs\}}$, where $\fstMatrix \in \reals^{m \times n}$, $\sndMatrix \in \reals^{k \times n}$ and let $\sndMatrix$ be nonempty.
    Then the following are equivalent:
    \begin{enumerate}
        \item $\superSet \subseteq \subSet$, that is:
            \begin{equation*}
                \sndMatrix x \leq \sndRhs \implies \fstMatrix x \geq \fstRhs.
            \end{equation*}
        \item For $i = 1,\dots,m$, the $m$ linear programs are solvable and satisfy:
            \begin{equation*}
                \min_x \{ (\fstMatrix x)_i \mid \sndMatrix x \leq \sndRhs \} \geq \fstRhs_i.
            \end{equation*}
        \item There exists a matrix $\thrdMatrix \in \reals^{m \times k}$ such that:
            \begin{equation*}\label{eq:polyhedral-set-containment-matrix}
                \fstMatrix  + \thrdMatrix \sndMatrix = 0, \, \fstRhs + \thrdMatrix \sndRhs \leq 0, \,\thrdMatrix \geq 0.
            \end{equation*}
    \end{enumerate}
\end{lemma}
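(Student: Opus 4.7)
The plan is to establish the three conditions as equivalent via the chain $1 \Leftrightarrow 2 \Leftrightarrow 3$, where the first equivalence is essentially bookkeeping and the second is the substantive step, resting on linear programming duality.

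For $1 \Leftrightarrow 2$, I would unfold the containment $\superSet \subseteq \subSet$ rowwise: it says that whenever $\sndMatrix x \leq \sndRhs$ we have $(\fstMatrix x)_i \geq \fstRhs_i$ for every $i \in \{1,\dots,m\}$, which is exactly the assertion $\inf\{(\fstMatrix x)_i : \sndMatrix x \leq \sndRhs\} \geq \fstRhs_i$ for all $i$. Since $\superSet$ is nonempty by hypothesis, each of the $m$ LPs in condition 2 is feasible; a finite lower bound $\fstRhs_i$ rules out unboundedness, so solvability is automatic as soon as the numerical inequality in condition 2 is asserted.

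For $2 \Leftrightarrow 3$, I would pass to the LP dual of each of the $m$ programs. Writing the $i$-th primal as $\min (\fstMatrix)_{i,:}\,x$ subject to $\sndMatrix x \leq \sndRhs$ (with $x$ free), the dual problem is
\[
\max_{w_i \ge 0}\; -\sndRhs^{\top} w_i \quad \text{subject to} \quad \sndMatrix^{\top} w_i = -(\fstMatrix)_{i,:}^{\top}.
\]
Assembling the vectors $w_i$ as the rows of a matrix $\thrdMatrix \in \reals^{m\times k}$ turns the $m$ dual equality constraints into the single matrix equation $\thrdMatrix \sndMatrix = -\fstMatrix$, i.e. $\fstMatrix + \thrdMatrix \sndMatrix = 0$; the $m$ dual objectives then form the components of $-\thrdMatrix \sndRhs$, and requiring each to be at least $\fstRhs_i$ is precisely $\fstRhs + \thrdMatrix \sndRhs \leq 0$. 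Thus condition 3 is nothing more than ``the $m$ duals are simultaneously feasible, with dual objective at least $\fstRhs_i$ in the $i$-th program''.

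With this dictionary in place, $3 \Rightarrow 2$ is weak duality (dual feasibility with objective $\geq \fstRhs_i$ bounds the primal optimum from below by $\fstRhs_i$), and $2 \Rightarrow 3$ is strong duality applied rowwise: a finite primal optimum $\geq \fstRhs_i$ produces a dual optimizer $w_i^\star \geq 0$ of value $\geq \fstRhs_i$, and stacking these as rows yields the desired $\thrdMatrix$. The only real subtlety I anticipate is careful sign bookkeeping (free $x$ together with $\leq$ constraints in the primal turn into equality constraints with sign flips in the dual objective); once this is untangled, nonemptiness of $\superSet$ gives primal feasibility and condition 2 gives boundedness, so strong LP duality applies with no further regularity hypothesis.
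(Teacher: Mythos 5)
Your proof is correct: the rowwise unfolding of the containment, the dual derivation with the right sign conventions for a free variable and $\leq$ constraints, and the use of weak/strong duality together with nonemptiness of $\superSet$ to guarantee primal feasibility and attainment all check out. The paper itself offers no argument here --- its ``proof'' is only the citation to Mangasarian --- and your chain $1 \Leftrightarrow 2 \Leftrightarrow 3$ via LP duality is exactly the standard argument of that reference, so there is nothing to reconcile.
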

\begin{proof}
    See \cite{Mangasarian2002}.
\end{proof}
\begin{corollary}\label{cor:robust-tree}
    Let $\uncertaintySet = \condSet{\plossCoeff}{\sndMatrix \plossCoeff \leq \sndRhs}$ be a polyhedral uncertainty set.
    Let $\feasCoeffsAndFlows = \condSet{\plossCoeff}{\fstMatrix \plossCoeff \geq \fstRhs}$ be the polyhedral set of feasible pressure loss factors $\plossCoeff$ for a gas transport problem over a tree-shaped network.

    Then robustness with respect to $\uncertaintySet$ can be decided by solving a linear program.
\end{corollary}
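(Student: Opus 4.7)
The plan is to observe that the corollary is essentially a direct translation of \cref{lemma:polyhedral-set-containmet} to the concrete situation of tree networks, together with the polyhedrality of $\feasCoeffsAndFlows$ that was established in \cref{eq:tree-feas-phi}. So first I would verify that the two hypotheses of that lemma are met: $\uncertaintySet = \{\plossCoeff : \sndMatrix\plossCoeff \le \sndRhs\}$ is given as polyhedral by assumption, and $\feasCoeffsAndFlows = \{\plossCoeff : \fstMatrix\plossCoeff \ge \fstRhs\}$ is polyhedral because on a tree the function $g(\plossCoeff) = (\nodeArcIncidentBasisTransposed)^{-1}\PlossCoeffBasis[\plossCoeff] F_B(\nodeArcIncidentBasisInverse \supplyVec)$ is linear in $\plossCoeff$ (since $F_B(\nodeArcIncidentBasisInverse\supplyVec)$ is a constant vector), and the $\min/\max$ expressions in the defining constraints unfold into finitely many linear inequalities.

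Once the setup is in place, I would simply invoke \cref{lemma:polyhedral-set-containmet} with $\subSet := \feasCoeffsAndFlows$ and $\superSet := \uncertaintySet$. The lemma states that $\uncertaintySet \subseteq \feasCoeffsAndFlows$ is equivalent to the existence of a matrix $\thrdMatrix \in \reals^{m \times k}$ with $\thrdMatrix \ge 0$, $\fstMatrix + \thrdMatrix\sndMatrix = 0$, and $\fstRhs + \thrdMatrix \sndRhs \le 0$. Deciding the existence of such a $\thrdMatrix$ is a single linear feasibility problem in the entries of $\thrdMatrix$, which can be cast as an LP (for instance by minimizing a zero objective, or by introducing slack variables). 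Combined with the first step, this shows robustness can be decided by solving one linear program, which is precisely the claim of \cref{cor:robust-tree}.

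There is no real obstacle here: the content of the corollary lies in recognizing that for a tree the set of feasible $\plossCoeff$ is polyhedral, a fact which has already been assembled in the preceding discussion around \cref{eq:tree-feas-phi}, so the proof reduces to citing \cref{lemma:polyhedral-set-containmet}. If one prefers the form in item (2) of the lemma, the argument instead yields $m$ independent linear programs, one per row of $\fstMatrix$, whose simultaneous nonnegativity of the optimal values is equivalent to robust feasibility; either formulation supports the stated conclusion.
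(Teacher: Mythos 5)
Your proposal is correct and follows exactly the route the paper intends: the corollary is stated without a separate proof precisely because, once $\feasCoeffsAndFlows$ is recognized as polyhedral via the linearity of $g(\plossCoeff)$ on trees (the discussion around \cref{eq:tree-feas-phi}), it is an immediate application of \cref{lemma:polyhedral-set-containmet} with $\superSet = \uncertaintySet$ and $\subSet = \feasCoeffsAndFlows$, item (3) of which is a single LP feasibility system in $\thrdMatrix$. Your identification of the roles of the two sets and the reduction to one linear program match the paper's argument.
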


\subsubsection{Robust Feasibility of Tree Networks as a Function of a Node's Pressure}
\Cref{cor:robust-tree} allows us to characterize robustness of a tree network in terms of the pressure at an arbitrary chosen node.
Let $\graph = (\nodes, \arcs)$ be the graph of a tree network.
Without loss of generality, we select the tree's root node $0$ as basis of our considerations.
Suppose the pressure value at this node is fixed, i.e. $\pressSqrAtNode[0] := \lbPressSqrAtNode[0] = \ubPressSqrAtNode[0]$.
Our aim is to specify all $\pressSqrAtNode[0]$ such that the gas network problem is robust feasible.

As can be inferred from \cref{eq:tree-feas-phi}, the pressure bounds only appear as constants in the linear inequality constraints.
With the conventions of the previous corollary, the set of feasible pressure loss coefficients can thus be expressed in terms of the root node's pressure $\pressSqrAtNode[0]$:
\begin{equation*}
    \feasCoeffsAndFlows(\pressSqrAtNode[0]) =
    \condSet{\plossCoeff}{\fstMatrix \plossCoeff \geq \fstRhs(\pressSqrAtNode[0])}.
\end{equation*}
The right hand side $s$ of the linear inequality system is a linear function $s\colon\reals \rightarrow \reals^{\abs{\arcs}}$ of $\pressSqrAtNode[0]$.
Applying \cref{lemma:polyhedral-set-containmet} to the set containment question $\uncertaintySet \subseteq \feasCoeffsAndFlows(\pressSqrAtNode[0])$ yields
\begin{align*}
    &\uncertaintySet = \condSet{\plossCoeff}{\sndMatrix \plossCoeff \leq \sndRhs} \subseteq \condSet{\plossCoeff}{\fstMatrix \plossCoeff \geq \fstRhs(\pressSqrAtNode[0])} = \feasCoeffsAndFlows(\pressSqrAtNode[0]) \\
    \iff& \mathcal{X}(\pressSqrAtNode[0]) := \condSet{\thrdMatrix \in \realsPos^{m \times k}}{
        \begin{aligned}
            \fstMatrix  + \thrdMatrix \sndMatrix &= 0 \\
            \fstRhs(\pressSqrAtNode[0]) + \thrdMatrix \sndRhs &\leq 0
        \end{aligned}} \neq \emptyset.
\end{align*}

\begin{lemma}\label{lemma:robust-tree-psqr-interval}
    Given a tree network $\graph=(\nodes, \arcs)$ with an arbitrary root node 0 and a polyhedral uncertainty set $\uncertaintySet$.
    Then the network is robust feasible if and only if the root node's squared pressure satisfies
    \begin{equation*}
        \pressSqrAtNode[0] \in [\optimal{\lbPressSqrAtNode[0]}, \optimal{\ubPressSqrAtNode[0]}]
    \end{equation*}
    with $\optimal{\lbPressSqrAtNode[0]}$, $\optimal{\ubPressSqrAtNode[0]}$ being optimal values of the linear programs
    \begin{subequations}\label{eq:robust-tree-psqr-interval-lps}
        \begin{align}
            \optimal{\lbPressSqrAtNode[0]} &:= \min_{\pressSqrAtNode[0], \thrdMatrix} \pressSqrAtNode[0] \text{ s.t. } \thrdMatrix \in \mathcal{X}(\pressSqrAtNode[0]),  \label{eq:robust-tree-psqr-interval-lps:a}\\
            \optimal{\ubPressSqrAtNode[0]} &:= \max_{\pressSqrAtNode[0], \thrdMatrix} \pressSqrAtNode[0] \text{ s.t. } \thrdMatrix \in \mathcal{X}(\pressSqrAtNode[0]).  \label{eq:robust-tree-psqr-interval-lps:b}
        \end{align}
    \end{subequations}
    \begin{proof}
    The set $\condSet{(\pressSqrAtNode[0], \thrdMatrix)}{\thrdMatrix \in \mathcal{X}(\pressSqrAtNode[0])}$ is polyhedral and thus convex.
    Therefore, the set of all feasible $\pressSqrAtNode[0]$ is the interval
    \begin{equation*}
        [\optimal{\lbPressSqrAtNode[0]}, \optimal{\ubPressSqrAtNode[0]}]
    \end{equation*}
    whose endpoints are the optimal values of the linear programs \cref{eq:robust-tree-psqr-interval-lps:a,eq:robust-tree-psqr-interval-lps:a}.
    \end{proof}
\end{lemma}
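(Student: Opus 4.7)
The plan is to leverage \cref{lemma:polyhedral-set-containmet}, which recasts the polyhedral set containment $\uncertaintySet \subseteq \feasCoeffsAndFlows(\pressSqrAtNode[0])$ as the nonemptiness of the multiplier set $\mathcal{X}(\pressSqrAtNode[0])$. The key observation, already set up in the preceding paragraph, is that when we fix $\pressSqrAtNode[0] := \lbPressSqrAtNode[0] = \ubPressSqrAtNode[0]$ and regard $\pressSqrAtNode[0]$ as a variable, the right-hand side $\fstRhs(\pressSqrAtNode[0])$ obtained from \cref{eq:tree-feas-phi} depends \emph{affinely linearly} on $\pressSqrAtNode[0]$, while $\fstMatrix$, $\sndMatrix$, and $\sndRhs$ are constants. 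Hence the joint set
\begin{equation*}
    \Omega := \condSet{(\pressSqrAtNode[0], \thrdMatrix)}{\thrdMatrix \in \mathcal{X}(\pressSqrAtNode[0])}
\end{equation*}
is cut out by finitely many linear equalities and inequalities in $(\pressSqrAtNode[0], \thrdMatrix)$, and so is a polyhedron.

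Next I would note that the set of root pressures that render the network robust feasible is exactly the projection $P := \proj_{\pressSqrAtNode[0]}(\Omega)$. Because the coordinate projection of a polyhedron is again a polyhedron (for instance by Fourier--Motzkin elimination), and a polyhedron in $\reals$ is a closed interval (possibly empty, unbounded, or a singleton), $P$ must be a closed interval. Its endpoints coincide by definition with the optima $\optimal{\lbPressSqrAtNode[0]}$ and $\optimal{\ubPressSqrAtNode[0]}$ of the linear programs \cref{eq:robust-tree-psqr-interval-lps:a} and \cref{eq:robust-tree-psqr-interval-lps:b}; whenever these LPs are bounded their optimal values are attained by the fundamental theorem of linear programming, so the interval is indeed $[\optimal{\lbPressSqrAtNode[0]}, \optimal{\ubPressSqrAtNode[0]}]$.

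Chaining the equivalences then finishes the proof: $\pressSqrAtNode[0] \in [\optimal{\lbPressSqrAtNode[0]}, \optimal{\ubPressSqrAtNode[0]}]$ iff $(\pressSqrAtNode[0], \thrdMatrix) \in \Omega$ for some $\thrdMatrix$ iff, by \cref{lemma:polyhedral-set-containmet}, $\uncertaintySet \subseteq \feasCoeffsAndFlows(\pressSqrAtNode[0])$, which by \cref{cor:robust-tree} characterizes robust feasibility of the tree network. The only place requiring slight care is the handling of degenerate LPs: if \cref{eq:robust-tree-psqr-interval-lps} is infeasible no $\pressSqrAtNode[0]$ gives a robust-feasible network and the interval is empty, while if one LP is unbounded the corresponding endpoint is $\pm\infty$ so the interval becomes a halfline or all of $\reals$; both cases are consistent with the stated characterization. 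I do not anticipate any hard step: the work has essentially already been done by \cref{lemma:polyhedral-set-containmet} together with the linearity of $\fstRhs(\pressSqrAtNode[0])$ in $\pressSqrAtNode[0]$.
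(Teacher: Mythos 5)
Your proposal is correct and follows essentially the same route as the paper's proof: both argue that $\condSet{(\pressSqrAtNode[0], \thrdMatrix)}{\thrdMatrix \in \mathcal{X}(\pressSqrAtNode[0])}$ is polyhedral, so its one-dimensional projection (the set of robust-feasible root pressures, via \cref{lemma:polyhedral-set-containmet}) is an interval whose endpoints are the LP optima. Your write-up is merely more explicit about the projection step, attainment, and degenerate cases, all of which the paper's terser proof leaves implicit.
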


\subsection{Eliminating the Absolute Value Functions}\label{subsec:eliminate-abs}
In order to apply tools from polynomial optimization to the gas network problem, the constraining functions of \feasCoeffsAndFlows have to be converted to a polynomial representation.
Currently, the pressure drop equations
\begin{equation*}
    \pressSqrAtNode[\node] - \pressSqrAtNode[\otherNode] = \plossCoeff \massflowAtArc[\arc] \abs{\massflowAtArc[\arc]} = \plossCoeff f(\massflowAtArc[\arc])
\end{equation*}
introduce absolute values in the problem.
After elimination of the absolute values, \feasCoeffsAndFlows is transformed from a piecewise polynomial representation to an equivalent but purely polynomial description.
Depending on the topology of a given instance, it may be possible to eliminate a lot of absolute values in advance since all arcs which are not part of a cycle have fixed flow direction.
For example, in the case of tree networks, all directions are known in advance.
Apart from that, the flow direction can be fixed by other preprocessing algorithms, \eg flow/pressure propagation or bound tightening methods.
Further discussion on that topic can be found in \cite{Geissler2011}.

This chapter presents three different methods for the elimination of absolute values.
First, a technique from mixed-integer optimization is employed to model absolute values using binary variables.
With this method, both the feasibility and the infeasibility method can be used.
Next, the implications of straight forward case distinction are discussed.
In general, this technique can only be used for the feasibility method as will be later explained.
Finally, the case distinction idea is further investigated for networks which contain a single cycle.
In this setting, the absolute values can be eliminated by restricting the uncertainty set to polyhedral subsets.
It is shown how the overall problem can be decomposed into linearly many subproblems which can be decided with both methods.

\subsubsection{Elimination by Auxiliary Binary Variables}
By introducing additional binary variables, the absolute value functions can be eliminated.
This technique is very similar to what is typically done in mixed-integer optimization.
We demonstrate the idea using the example of $\abs{x}{x}$.
Assume that $\abs{x}$ is bounded: $\abs{x} \leq M$.
This is a natural assumption since the flows within the network cannot become arbitrary large.
With the introduction of a new binary variable $b$, the signed-square expression $y = \abs{x}{x}$ can be stated equivalently using polynomials via
\begin{align*}
        y &= (2b - 1) x^2, \\
        (-1+b)M \leq &x \leq bM, \\
        b &= b^2.
\end{align*}
Applying this construction to each absolute value function on each arc $\arc \in \arcs$ yields a purely polynomial description of $\feasCoeffsAndFlows$ that can be used in the feasibility and infeasibility methods.


\subsubsection{Elimination by Case Distinction: the General Case}
Using the original problem definition \cref{model:potn}, each pipe $\arc \in \arcs$ introduces an absolute value with its pressure loss equation.
In general, one might expect that by eliminating each absolute value function, the problem is split into $2^{\abs{\arcs}}$ cases.
This paragraph shows how the number of cases mainly depends on the amount of fundamental cycles in the graph and thus can be much smaller than $2^{\abs{\arcs}}$.
We remark that the following results identify the feasible flow directions in a linear network flow model instead of the gas transport problem.
However, this is no restriction since adding constraints concerning the gas physics reduces the number of possible cases even further.

Due to \cref{lemma:split-proj-into-subsets}, the overall set containment problem can be decided by splitting the problem into a series of subproblems.
Each subproblem arises by restricting the original problem to certain subsets, \eg to orthants of $\reals^{\abs{\arcs}}$ for the absolute value case distinction.
Let $\mathcal{O}_1, \ldots, \mathcal{O}_{2^{\abs{\arcs}}} = \{\realsPos, \realsNeg\}^{\abs{\arcs}}$ be the set of orthants in $\reals^{\abs{\arcs}}$.
In the original model \cref{model:potn}, the additional constraint $\massflow \in \mathcal{O}_i$ restricts the flow to a specific orthant and allows the elimination of all absolute value functions.
In the reduced model, the variables $\massflowBasis$ are replaced by $\massflowBasis = \nodeArcIncidentBasisInverse \left(\supplyVec -\nodeArcIncidentNonBasis \massflowNonBasis\right)$.
The transformed case distinction is
\begin{equation*}
    \begin{pmatrix}\massflowBasis \\ \massflowNonBasis\end{pmatrix} =
    \begin{pmatrix}\nodeArcIncidentBasisInverse \left(\supplyVec -\nodeArcIncidentNonBasis \massflowNonBasis\right) \\ \massflowNonBasis \end{pmatrix} \in \mathcal{O}_i.
\end{equation*}

By considering the reduced model, the next proposition shows that the number of case distinctions mainly depends on the amount of fundamental cycles in the graph.
\begin{proposition}
    Let $\graph$ be a connected digraph with $\abs{A}$ arcs and $\abs{N}$ fundamental cycles.
    Then there can be at most $\sum_{i=0}^{\abs{N}} \binom{\abs{A}}{i} \in \mathcal{O}(\abs{A}^{\abs{N}})$ many feasible flow directions in the network.
    The corresponding subproblems can be constructed in \runtime $\mathcal{O}(\abs{A}^{\abs{N}})$.
\end{proposition}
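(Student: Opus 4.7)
The plan is to translate the counting question into a standard hyperplane arrangement problem in the reduced space $\reals^{\abs{N}}$. First I would invoke the reduction from \cref{subsubsecn:reduction-of-variables}: every $\massflow$ satisfying $\nodeArcIncidentFull\massflow=\fullSupplyVec$ is obtained from a unique cycle-flow vector $\massflowNonBasis\in\reals^{\abs{N}}$ via the affine map $\flowExtensionMap$, and each component
\[
    \flowExtensionMap_\arc(\massflowNonBasis) = \bigl(\nodeArcIncidentBasisInverse(\supplyVec-\nodeArcIncidentNonBasis\massflowNonBasis)\bigr)_\arc \text{ for } \arc\in B,\qquad \flowExtensionMap_\arc(\massflowNonBasis)=(\massflowNonBasis)_\arc \text{ for } \arc\in N,
\]
is an affine linear function of $\massflowNonBasis$. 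Thus specifying a flow direction on every arc amounts to specifying the sign of each of $|A|$ affine linear functions of one variable $\massflowNonBasis\in\reals^{\abs{N}}$.

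Second, I would phrase this as an arrangement problem. Consider the arrangement
\[
    \mathcal{A} := \{H_\arc\}_{\arc\in\arcs},\qquad H_\arc := \bigl\{\massflowNonBasis\in\reals^{\abs{N}} : \flowExtensionMap_\arc(\massflowNonBasis)=0\bigr\}.
\]
A sign pattern $s\in\{+,-\}^{\abs{A}}$ is realized by some $\massflow$ with $\nodeArcIncidentFull\massflow=\fullSupplyVec$ if and only if the open region $\bigcap_{\arc\in\arcs}\{\massflowNonBasis:s_\arc\,\flowExtensionMap_\arc(\massflowNonBasis)>0\}$ is nonempty. Hence the number of feasible flow directions is at most the number of full-dimensional cells of $\mathcal{A}$ (sign patterns with some zero entries lie on lower-dimensional faces and either fall into a fixed orthant by perturbation, or are dominated by the open-cell count).

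Third, I would invoke the classical Schläfli–Zaslavsky bound: any arrangement of $\abs{A}$ affine hyperplanes in $\reals^{\abs{N}}$ has at most $\sum_{i=0}^{\abs{N}}\binom{\abs{A}}{i}$ connected components in its complement, with equality iff the arrangement is in general position. Combined with the elementary estimate $\sum_{i=0}^{\abs{N}}\binom{\abs{A}}{i}\in\mathcal{O}(\abs{A}^{\abs{N}})$, this yields the claimed cardinality bound.

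For the algorithmic part I would use a standard incremental arrangement-construction: insert the $\abs{A}$ hyperplanes one at a time, maintaining a list of cells; inserting the $k$-th hyperplane modifies at most $\mathcal{O}(k^{\abs{N}-1})$ cells, so the total running time telescopes to $\mathcal{O}(\abs{A}^{\abs{N}})$. For each produced cell, evaluating $\flowExtensionMap$ at any interior point gives the sign pattern, which immediately determines the corresponding subproblem obtained from \cref{model:rpotn} by eliminating every $\abs{\cdot}$ using the fixed arc directions. The main obstacle, and the only nonroutine step, is identifying the count of feasible directions with cell enumeration in $\mathcal{A}$; once this identification is made, both the combinatorial bound and the construction complexity follow from textbook results on real hyperplane arrangements.
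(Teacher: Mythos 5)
Your proposal follows essentially the same route as the paper's own proof: reduce to the arrangement of the $\abs{A}$ affine hyperplanes $\flowExtensionMap_\arc(\massflowNonBasis)=0$ in $\reals^{\abs{N}}$, bound the number of regions by Zaslavsky's theorem, and obtain the subproblems via an incremental arrangement-construction algorithm (the paper cites Edelsbrunner et al.\ for exactly this step). The argument is correct; you merely spell out the cell--sign-pattern correspondence in slightly more detail than the paper does.
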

\begin{proof}
    The problem of finding all feasible flow directions can be reduced to a problem concerning the arrangement of hyperplanes.
    For ease of exposition, consider the \nonnegative orthant $\mathcal{O}^+ = \realsPos^{\abs{A}}$.
    Using the flow function $\flowExtensionMap(\cdot)$ as defined in \cref{subsubsecn:reduction-of-variables}, fixing the flow direction to this orthant amounts to the constraint $\flowExtensionMap(\massflowNonBasis) \in \mathcal{O}^+$, \ie $\flowExtensionMap(\massflowNonBasis) \geq 0$.
    Each entry of $\flowExtensionMap(\cdot)$ defines a hyperplane in $\reals^{\abs{N}}$.
    Consider the regions that can arise by segmenting $\reals^{\abs{N}}$ using the hyperplanes in $\flowExtensionMap(\cdot)$.
    For all $\arc \in \arcs$, each region is a subset of either $\flowExtensionMap_\arc(\massflowNonBasis) < 0$ or $\flowExtensionMap_\arc(\massflowNonBasis) > 0$.
    Therefore, the flow direction on all arcs in the graph is constant on each region.
    The total number of regions that can be constructed in $\reals^{\abs{N}}$ using $\abs{A}$ hyperplanes is bounded by $\sum_{i=0}^{\abs{N}} \binom{\abs{A}}{i} \in \mathcal{O}(\abs{A}^{\abs{N}})$ (\cite{Zaslavsky1975}).
    Furthermore, constructing all regions can be achieved in \runtime $\mathcal{O}(\abs{A}^{\abs{N}})$ using the algorithm of \cite{Edelsbrunner1986}.
\end{proof}

However, there is an issue arising with this approach as the subproblems are of the type (see \cref{lemma:split-proj-into-subsets})
\begin{equation*}
    \proj_\phi(\eqSet_\uncertaintySet \cap \mathcal{O}_i) \subseteq \proj_\phi(\eqSet_\uncertaintySet \cap \inEqSet \cap \mathcal{O}_i).
\end{equation*}
The feasibility method can be employed as-is since optimizing over a projection set poses no restriction.
On the other hand, the infeasibility method can not be applied as easily since it requires  the moments over the uncertainty set.
In case of the given subproblems, this is the set $\proj_\phi(\eqSet_\uncertaintySet \cap \mathcal{O}_i)$.
In general, it is unclear how the moments can be obtained without explicitly constructing the projection.
Nevertheless, this is possible  for networks with one cycle.
The next section gives the description of $\proj_\phi(\eqSet_\uncertaintySet \cap \mathcal{O}_i)$ for this case.
In this setting, the infeasibility method can be applied since the projected set is polyhedral.

\subsubsection{Elimination by Case Distinction: a Shortcut for Networks with One Cycle}\label{sec:elim-cycle-abs}
On networks with only one cycle, a considerable simplification can be applied.
The absolute values can be eliminated by restricting the problem to certain subsets of the uncertainty set.
In contrast, the previous case distinction method relied on restricting the flow variables.
The advantage of using subsets of the uncertainty set for this purpose is that the infeasibility method can be applied as well since it requires explicit knowledge of the uncertainty set.

For the purpose of this chapter, we assume a directed cyclic graph where each arc points to a different node:
\begin{assumption}\label{asmp:cyclic-connected-graph}
    Let $\graph=(\nodes, \arcs)$ be a directed cyclic graph with $\nodes=\{0,\dots,n\}$, $\arcs=\{(0,1),(1,2),\dots,(n-1,n),(n,0)\}$, and nonzero demand $\fullSupplyVec \in \reals^{\abs{\nodes}}$.
\end{assumption}
Due to the cyclic structure, the arcs can be uniquely identified by their first node.
We assume the last edge to be part of the nonbasis, thus there is only one problem variable $\massflowAtArc[n] \in \reals$ with $\massflowNonBasis \equiv \massflowAtArc[n]$.
Employing a very similar construction as \cite[Chapter 6.1]{Gotzes2016}, we obtain the set $\eqSet$ of feasible $(\plossCoeff,\massflowAtArc[n])$-combinations and the associated cycle flow equation:
\begin{proposition}\label{prop:def-cycle-eq-h}
    Let \cref{asmp:cyclic-connected-graph} be satisfied.\\
    Then $\eqSet = \condSet{\plossCoeff \in \realsStrictPos^{\abs{A}}, \, \massflowAtArc[n] \in \reals}{h(\plossCoeff, \massflowAtArc[n]) = 0}$ with
    \begin{equation*}
        h(\plossCoeff, \massflowAtArc[n]) := - \sum_{\arc \in \arcs} f(\massflowAtArc[\arc]) = - \sum_{\arc \in \arcs} f(\massflowAtArc[n] - \beta_\arc)
    \end{equation*}
    and $\beta_\arc \in \reals$ for $\arc \in \arcs$.
    The constraint $h(\plossCoeff, \massflowAtArc[n]) = 0$ is the so-called \emph{cycle flow equation}.
\end{proposition}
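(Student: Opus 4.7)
The plan is to derive the single cycle flow equation $h(\plossCoeff,\massflowAtArc[n])=0$ from the reduced-model equation $\nodeArcIncidentNonBasisTransposed g(\plossCoeff,\massflowNonBasis)=\PlossCoeffNonBasis[\plossCoeff]F_N(\massflowNonBasis)$ by exploiting the single-cycle structure of $\graph$. Since $\abs{N}=1$, the non-basis consists of a single arc (which we choose to be arc $n$), and the basis $B=\{(0,1),(1,2),\dots,(n-1,n)\}$ induces a spanning path rooted at node $0$. The cycle-flow variable is thus the scalar $\massflowNonBasis\equiv\massflowAtArc[n]\in\reals$, and the defining equation of $\eqSet$ reduces to a single scalar equation.

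The first step is to express all basis flows as affine functions of $\massflowAtArc[n]$. Under the orientation in \cref{asmp:cyclic-connected-graph}, the flow-balance equation at each node $i\in\{1,\dots,n\}$ is $\massflowAtArc[i]-\massflowAtArc[i-1]=\supplyAtNode[i]$ (where I index arc $(i,i+1\!\mod (n+1))$ by its tail $i$). Summing telescopically backward from $a+1$ to $n$ yields
\begin{equation*}
    \massflowAtArc[a]=\massflowAtArc[n]-\beta_a,\qquad \beta_a:=\sum_{j=a+1}^{n}\supplyAtNode[j],
\end{equation*}
with $\beta_n=0$. This reproduces the parametrization $\massflowAtArc[a]=\massflowAtArc[n]-\beta_a$ claimed in the proposition and is exactly the componentwise form of $\massflowBasis=\nodeArcIncidentBasisInverse(\supplyVec-\nodeArcIncidentNonBasis\massflowAtArc[n])$.

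The second step is to derive the cycle equation itself. Plugging the pressure-loss relation $\pressSqrAtNode[u]-\pressSqrAtNode[v]=\plossCoeffAtArc[\arc]f(\massflowAtArc[\arc])$ into a walk around the directed cycle causes the nodal (squared) pressures to telescope to zero, leaving the Kirchhoff-type identity $\sum_{\arc\in\arcs}\plossCoeffAtArc[\arc]f(\massflowAtArc[\arc])=0$, which is precisely $h(\plossCoeff,\massflowAtArc[n])=0$ after substituting $\massflowAtArc[\arc]=\massflowAtArc[n]-\beta_\arc$. Conversely, given any $\massflowAtArc[n]$ satisfying this equation, one recovers a consistent pressure vector by fixing $\pressSqrAtNode[0]$ arbitrarily and propagating along the basis path; the cycle-closure identity guarantees the value obtained at node $0$ after traversing the non-basis arc $n$ is the same. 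Hence the single equation $h=0$ is equivalent to $\nodeArcIncidentNonBasisTransposed g(\plossCoeff,\massflowAtArc[n])-\PlossCoeffNonBasis[\plossCoeff]F_N(\massflowAtArc[n])=0$, which is the defining equation of $\eqSet$.

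The main obstacle is a bookkeeping one rather than a conceptual one: I need to keep the sign conventions in the incidence matrix $\nodeArcIncident$, the orientation of the cycle traversal, and the partition $(\nodeArcIncidentBasis,\nodeArcIncidentNonBasis)$ consistent so that the algebraic reduced equation and the physically-motivated telescoping identity match (including the overall sign in front of the sum in $h$). Once these conventions are pinned down—most directly by verifying that $\nodeArcIncidentNonBasisTransposed(\nodeArcIncidentBasisTransposed)^{-1}$ applied to the basis pressure-drop vector returns the signed potential drop across arc $n$—the equivalence is immediate and closely parallels the construction in \cite[Chapter~6.1]{Gotzes2016} explicitly invoked in the proposition.
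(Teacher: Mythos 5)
Your proof is correct and follows essentially the same route the paper takes: the paper offers no self-contained argument for this proposition but delegates it to the construction in \cite[Chapter~6.1]{Gotzes2016}, which is exactly the two telescoping steps you carry out (flow conservation along the basis path to get $\massflowAtArc[\arc]=\massflowAtArc[n]-\beta_\arc$, then the pressure-drop sum around the cycle to get the closure identity). One small point worth noting: your version of the cycle equation, $\sum_{\arc\in\arcs}\plossCoeffAtArc[\arc]f(\massflowAtArc[\arc])=0$, correctly retains the coefficients $\plossCoeffAtArc[\arc]$, which are missing from the displayed formula for $h$ in the proposition as printed but are clearly intended (the paper later differentiates $h$ with the $\plossCoeffAtArc[i]$ present and calls $h$ linear in $\plossCoeff$).
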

Using $h$, a characterization of the set of all pressure loss coefficients $\plossCoeff$ which lead to the flow $\massflowAtArc[n]$ being bounded in some interval can be found:
\begin{lemma}\label{lemma:flow-in-intvl-as-subset-of-u}
    Let \cref{asmp:cyclic-connected-graph} be satified.
    Let $\lbMassflowAtArc[n]$, $\ubMassflowAtArc[n] \in \reals$, $\plossCoeff \in \realsStrictPos^{\abs{\arcs}}$ and let $h$ be as in \cref{prop:def-cycle-eq-h}.
    Then
    \begin{equation*}
         \condSet{\plossCoeff}{h(\plossCoeff, \massflowAtArc[n])=0 \text{ for some } \massflowAtArc[n] \in [\lbMassflowAtArc[n], \,\ubMassflowAtArc[n]]} = \condSet{\plossCoeff}{h(\plossCoeff, \ubMassflowAtArc[n]) \leq 0,\, h(\plossCoeff, \lbMassflowAtArc[n]) \geq 0}.
    \end{equation*}
\end{lemma}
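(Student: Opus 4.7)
The plan is to prove the equality of the two sets by fixing $\plossCoeff$ and analyzing $h(\plossCoeff, \cdot)$ as a function of the single scalar variable $\massflowAtArc[n] \in \reals$. The key structural property is that, for any admissible $\plossCoeff \in \realsStrictPos^{\abs{\arcs}}$, the map $\massflowAtArc[n] \mapsto h(\plossCoeff, \massflowAtArc[n])$ is continuous and strictly monotonically decreasing. Once this is established, the claim follows from the intermediate value theorem together with monotonicity.

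First, I would verify the monotonicity property at the building-block level. Since $f(x) = x\abs{x}$ satisfies $f(x) - f(y) = (x-y)(\abs{x}+\abs{y})$ for $xy \geq 0$ and can be checked case-by-case when $x$ and $y$ have different signs, $f$ is continuous and strictly increasing on $\reals$. Hence each term $-\plossCoeff_\arc f(\massflowAtArc[n] - \beta_\arc)$ is continuous and, because $\plossCoeff_\arc > 0$, strictly decreasing in $\massflowAtArc[n]$. Summing over $\arc \in \arcs$ preserves both continuity and strict decrease, so $h(\plossCoeff, \cdot)$ is continuous and strictly decreasing for every fixed $\plossCoeff \in \realsStrictPos^{\abs{\arcs}}$.

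With this property at hand, I would prove both inclusions of the set equality separately. For ``$\subseteq$'', suppose $\plossCoeff$ admits some $\massflowAtArc[n] \in [\lbMassflowAtArc[n], \ubMassflowAtArc[n]]$ with $h(\plossCoeff, \massflowAtArc[n]) = 0$. Since $h(\plossCoeff, \cdot)$ is monotonically decreasing and $\lbMassflowAtArc[n] \leq \massflowAtArc[n] \leq \ubMassflowAtArc[n]$, we obtain
\begin{equation*}
    h(\plossCoeff, \lbMassflowAtArc[n]) \geq h(\plossCoeff, \massflowAtArc[n]) = 0 \geq h(\plossCoeff, \ubMassflowAtArc[n]),
\end{equation*}
yielding membership in the right-hand set. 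Conversely, for ``$\supseteq$'', if $h(\plossCoeff, \lbMassflowAtArc[n]) \geq 0$ and $h(\plossCoeff, \ubMassflowAtArc[n]) \leq 0$, then by continuity of $h(\plossCoeff, \cdot)$ on the closed interval $[\lbMassflowAtArc[n], \ubMassflowAtArc[n]]$, the intermediate value theorem furnishes some $\massflowAtArc[n] \in [\lbMassflowAtArc[n], \ubMassflowAtArc[n]]$ with $h(\plossCoeff, \massflowAtArc[n]) = 0$, so $\plossCoeff$ lies in the left-hand set.

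I do not expect any serious obstacle: the only nontrivial ingredient is the elementary fact that $f(x)=x\abs{x}$ is strictly increasing, which is a short case distinction on the signs of the arguments. The value of the lemma lies not in its proof but in its role downstream, because it replaces an existential quantifier over $\massflowAtArc[n]$ by two polynomial inequality conditions in $\plossCoeff$ alone, which is exactly what is needed to describe $\proj_\plossCoeff(\eqSet_\uncertaintySet \cap \mathcal{O}_i)$ as a (piecewise) polynomial set suitable for the infeasibility method in Section~\ref{subsec:infeas-approach}.
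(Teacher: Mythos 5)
Your proposal is correct and follows essentially the same route as the paper: fix $\plossCoeff$, establish that $h(\plossCoeff,\cdot)$ is continuous and monotonically decreasing, then get one inclusion from monotonicity and the other from the intermediate value theorem. The only (cosmetic) differences are that the paper verifies monotonicity by differentiating $f(x)=x\abs{x}$ and argues via two one-sided conditions using the limits of $h$ at $\pm\infty$, whereas you argue algebraically and apply the intermediate value theorem directly on the compact interval $[\lbMassflowAtArc[n],\ubMassflowAtArc[n]]$, which is marginally tidier.
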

\begin{proof}
    For constant $\phi \in \realsStrictPos^{\abs{\arcs}}$, the function $h(\plossCoeff, \massflowAtArc[n])$ is monotonically decreasing in $\massflowAtArc[n]$ since
    \begin{equation*}
        \frac{\d}{\d \massflowAtArc[n]}h(\plossCoeff, \massflowAtArc[n]) = -\sum_{i=0}^n \plossCoeffAtArc[i] \frac{\d}{\d \massflowAtArc[n]} f(\massflowAtArc[n] - \beta_i) = -\sum_{i=0}^n \plossCoeffAtArc[i] 2\abs{\massflowAtArc[n] - \beta_i} \leq 0.
    \end{equation*}
    Furthermore, $\lim_{\massflowAtArc[n] \rightarrow \pm \infty} h(\plossCoeff, \massflowAtArc[n]) = \mp \infty$.

    Let $A:=\condset{\plossCoeff \in \realsStrictPos^{\abs{\arcs}}}{h(\plossCoeff, \massflowAtArc[n])=0,\, \lbMassflowAtArc[n] \leq \massflowAtArc[n]}$ and $B := \condset{\plossCoeff \in \realsStrictPos^{\abs{\arcs}}}{h(\plossCoeff, \lbMassflowAtArc[n]) \geq 0}$.
    We show $A = B$ first:

    ``$\Rightarrow$'': Pick $\plossCoeff \in A$.
    By definiton of $A$, there is $\lbMassflowAtArc[n] \leq \massflowAtArc[n]$ with $h(\plossCoeff, \massflowAtArc[n]) = 0$.
    Since $h(\plossCoeff, \cdot)$ is monotonically decreasing, $h(\plossCoeff, \lbMassflowAtArc[n]) \geq h(\plossCoeff, \massflowAtArc[n]) = 0$.
    Therfore $\plossCoeff \in B$.

    ``$\Leftarrow$'': Pick $\plossCoeff \in B$.
    Since $h$ is continuous, $h(\plossCoeff, \lbMassflowAtArc[n]) \geq 0$, and $\lim_{\massflowAtArc[n] \to \infty} h(\plossCoeff, \massflowAtArc[n]) = -\infty$, the intermediate value theorem implies a $h(\plossCoeff, \massflowAtArc[n]) = 0$.
    Therefore $\plossCoeff \in A$.%

    This shows $A = B$.
    There is a similar result where the inequalities in the definitions of $A$, $B$ are flipped.
    Together, both results prove that
    \begin{equation*}
        \condSet{\plossCoeff \in \realsStrictPos^{\abs{\arcs}}}{h(\plossCoeff, \massflowAtArc[n])=0,\, \massflowAtArc[n] \in [\lbMassflowAtArc[n], \,\ubMassflowAtArc[n]]} = \condSet{\plossCoeff \in \realsStrictPos^{\abs{\arcs}}}{h(\plossCoeff, \ubMassflowAtArc[n]) \leq 0,\, h(\plossCoeff, \lbMassflowAtArc[n]) \geq 0}.
    \end{equation*}
\end{proof}
With this lemma, restricting $\massflowAtArc[n]$ to a given interval can be expressed equivalently by restricting the considered pressure loss coefficients $\plossCoeff$.
Furthermore, the constraints for $\plossCoeff$ are hyperplanes in $\reals^{\abs{\arcs}}$ as $h(\plossCoeff, \massflowAtArc[n])$ is linear in $\plossCoeff$.

We adapt a procedure from \cite[Proposition~5]{Gotzes2016} to our setting in order to identify intervals for the flow $\massflowAtArc[n]$ that guarantee constant flow direction on all arcs of the network.
Once the possible subsets are identified, we apply \cref{lemma:flow-in-intvl-as-subset-of-u} to relate the obtained flow intervals to subsets in the space of the uncertainty.\\
The absolute value functions only occur in the form $\plossCoeff \abs{\massflowAtArc[\arc]} (\massflowAtArc[\arc])$.
From \cref{prop:def-cycle-eq-h}, the flow $\massflowAtArc[\arc]$ along an arc $\arc \in \arcs$ is given by
\begin{equation*}
    \flowExtensionMap_\arc(\massflowAtArc[n]) = \massflowAtArc[n] - \beta_\arc.
\end{equation*}
Therefore, the absolute value $\abs{\flowExtensionMap_\arc(\massflowAtArc[n]) }$ can be eliminated by restricting the flow $\massflowAtArc[n]$ to either $\massflowAtArc[n] \geq \beta_\arc$ or $\massflowAtArc[n] \leq  \beta_\arc$.
Next, reorder $\beta_0, \beta_1,\ldots,\beta_n$ such that $\beta_{i_0} \leq \beta_{i_1} \leq \ldots \leq \beta_{i_n}$.
With this in mind, taking any consecutive pair $\beta_{i_{j}}, \beta_{i_{j+1}}$ yields an interval for $\massflowAtArc[n]$ such that the flow over the whole network is constant.
Due to \cite{Gotzes2016} and the nonzero demand from \cref{asmp:cyclic-connected-graph}, the solutions of $h(\plossCoeff, \massflowAtArc[n]) = 0$ can only be within $[\beta_{i_0}, \, \beta_{i_n}]$ for any fixed $\plossCoeff$.
Therefore, the absolute values can be eliminated by restricting $\massflowAtArc[n]$ to the intervals
\begin{align*}
    [\beta_{i_0}, \, \beta_{i_1}], && [\beta_{i_1}, \, \beta_{i_2}], && \ldots && [\beta_{i_{n-1}}, \, \beta_{i_n}].
\end{align*}
Applying \cref{lemma:flow-in-intvl-as-subset-of-u} to these intervals yields an equivalent condition for constant flow directions in the space of the uncertainty.
\begin{proposition}
    Let \cref{asmp:cyclic-connected-graph} be satisfied and let
    \begin{equation*}
        \uncertaintySet_j := \uncertaintySet \cap \condSet{\plossCoeff \in \realsStrictPos^{\abs{\arcs}}}{h(\plossCoeff, \beta_{i_{j+1}}) \leq 0,\, h(\plossCoeff, \beta_{i_{j}}) \geq 0} \quad \text{for } j=0,\ldots n-1.
    \end{equation*}
    Then the set containment question $\uncertaintySet \subseteq \proj_\plossCoeff(\feasCoeffsAndFlows) = \proj_\plossCoeff(\eqSet_\uncertaintySet \cap \inEqSet)$ can be decided by solving the subproblems
    \begin{equation*}
        \uncertaintySet_j \subseteq \proj_\plossCoeff(\eqSet_{\uncertaintySet_j} \cap \inEqSet) \quad \text{for } j=0,\ldots n-1.
    \end{equation*}
\end{proposition}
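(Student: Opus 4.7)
My plan is to derive the decomposition directly from \cref{lemma:split-proj-into-subsets} by choosing a cover $\{\mathcal{S}_j\}_{j=0}^{n-1}$ of $\reals^{\abs{\arcs}}\times\reals$ that splits the $\massflowAtArc[n]$-axis at the reordered shifts $\beta_{i_0}\le\beta_{i_1}\le\dots\le\beta_{i_n}$, and then translating each flow-interval restriction into the corresponding constraint on $\plossCoeff$ supplied by \cref{lemma:flow-in-intvl-as-subset-of-u}.

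Concretely, I would take $\mathcal{S}_j:=\reals^{\abs{\arcs}}\times[\beta_{i_j},\beta_{i_{j+1}}]$ for $1\le j\le n-2$ and enlarge the first and last piece to the half-lines $\mathcal{S}_0:=\reals^{\abs{\arcs}}\times(-\infty,\beta_{i_1}]$ and $\mathcal{S}_{n-1}:=\reals^{\abs{\arcs}}\times[\beta_{i_{n-1}},\infty)$ so that $\bigcup_{j=0}^{n-1}\mathcal{S}_j=\reals^{\abs{\arcs}}\times\reals$, as required by \cref{lemma:split-proj-into-subsets}. Applying \cref{lemma:flow-in-intvl-as-subset-of-u} (and, for $j\in\{0,n-1\}$, the one-sided variant obtained from the same monotonicity plus intermediate-value reasoning) gives
\begin{equation*}
    \proj_\plossCoeff(\eqSet_\uncertaintySet\cap\mathcal{S}_j)
    = \uncertaintySet\cap\condSet{\plossCoeff\in\realsStrictPos^{\abs{\arcs}}}{h(\plossCoeff,\beta_{i_{j+1}})\le 0,\; h(\plossCoeff,\beta_{i_j})\ge 0}
    = \uncertaintySet_j,
\end{equation*}
where for the two extreme values of $j$ one of the bracketed inequalities is automatic on $\realsStrictPos^{\abs{\arcs}}$ because $h(\plossCoeff,\beta_{i_0})\ge 0$ and $h(\plossCoeff,\beta_{i_n})\le 0$ always hold. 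Combined with the equivalence provided by \cref{lemma:split-proj-into-subsets},
\begin{equation*}
    \uncertaintySet\subseteq\proj_\plossCoeff(\eqSet_\uncertaintySet\cap\inEqSet)
    \iff
    \uncertaintySet_j\subseteq\proj_\plossCoeff(\eqSet_{\uncertaintySet_j}\cap\inEqSet) \quad\forall j=0,\dots,n-1,
\end{equation*}
this yields the claim.

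The main obstacle I foresee is justifying the enlargement of $\mathcal{S}_0$ and $\mathcal{S}_{n-1}$: the intervals $[\beta_{i_j},\beta_{i_{j+1}}]$ only cover $[\beta_{i_0},\beta_{i_n}]$, whereas the hypothesis of \cref{lemma:split-proj-into-subsets} demands a cover of the full ambient space. The rescue is that for $\plossCoeff\in\realsStrictPos^{\abs{\arcs}}$ every root of $h(\plossCoeff,\cdot)$ lies in $[\beta_{i_0},\beta_{i_n}]$ by the strict monotonicity of $h$ in its second argument, so the added half-lines contribute no new points to $\eqSet_\uncertaintySet$ and the projections still match the $\uncertaintySet_j$ from the statement. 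Once this boundary check is in place, the proposition follows as a routine application of the two supporting lemmas.
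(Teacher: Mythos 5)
Your proposal is correct and follows essentially the same route the paper sketches in the text preceding the proposition: partition the flow axis at the sorted $\beta_{i_j}$, invoke \cref{lemma:split-proj-into-subsets}, and translate each flow interval into the corresponding $\uncertaintySet_j$ via \cref{lemma:flow-in-intvl-as-subset-of-u} together with the observation that all roots of $h(\plossCoeff,\cdot)$ lie in $[\beta_{i_0},\beta_{i_n}]$. Your explicit handling of the covering hypothesis by extending the extreme pieces to half-lines is a welcome tightening of a step the paper leaves implicit, but it is the same argument.
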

We remark that if $\uncertaintySet$ is polyhedral then $\uncertaintySet_j$ is polyhedral as well.

\section{Numerical Experiments}\label{sec:numerical-experiments}
In this section, some practical results of the feasibility and infeasibility approaches on a set of small gas networks under uncertainty are presented.
Instead of considering arbitrary gas networks, we focus on highlighting our methods' performance on the core problem: deciding a single cycle under uncertainty.
Using \cref{lemma:robust-tree-psqr-interval}, the feasibility of any subtree in a given network can be reduced if the pressure at the root node is contained in a pre-calculated interval.
This allows us to remove any subtree by updating the pressure bounds at the intersecting node with the remaining network.
Assuming there is only one remaining cycle, \cref{lemma:flow-in-intvl-as-subset-of-u} is then used to split the problem into subproblems on subsets of the uncertainty set while eliminating all absolute values.
Since this just increases the number of problems to consider but does not fundamentally change their nature, we start with a single cycle and uncertainty sets that guarantee constant flow direction on all arcs.

The example networks are cyclic with nodes $V = \{1, 2, \ldots, n\}$ for $n \in \{2, \ldots, 7\}$ and arcs $A = \{(1, 2), (2, 3), \ldots, (n-1, n), (n, 1)\}$.
A family of uncertainty sets is considered:
\begin{equation*}
    \uncertaintySet(c) = \times_{a \in A} [1, c],\quad c\in[2, 4].
\end{equation*}
Furthermore, define two special uncertainty sets,
\begin{equation*}
    \uncertaintySet_\text{feas} := \uncertaintySet(2) \quad\text{ and }\quad \uncertaintySet_\text{infeas} := \uncertaintySet(4),
\end{equation*}
which we want to investigate with respect to feasibility and infeasibility, respectively.

\Cref{tab:instance-data} shows the parameters of the considered instances.
The columns denote the nodes within the network.
Each row denotes the specific instance with $n$ nodes.
Within each row, the demand and bounds of the squared pressure $\pressSqr$ at each node is displayed in the first and second lines, respectively.
\begin{table}[H]
  \centering
  \caption{Demand and squared pressure bounds $\pressSqr$ per node $\node$ for each test network.}
  \label{tab:instance-data}
  \begin{adjustbox}{max width=\linewidth}
\begin{tabular}{ll|rrrrrrr}
\toprule
    &              &           &             & node $\node \in V$ &      &           &           &           \\
    &              &         1 &           2 &           3 &           4 &         5 &         6 &         7 \\
\midrule
n=2 & demand       &       -10 &          10 &             &             &           &           &           \\
    & $\pressSqr$-bounds &  [0, 200] &  [140, 200] &             &             &           &           &           \\
  \midrule
n=3 & demand       &       -10 &           2 &           8 &             &           &           &           \\
    & $\pressSqr$-bounds &  [0, 200] &    [0, 200] &  [130, 200] &             &           &           &           \\
  \midrule
n=4 & demand       &       -10 &           2 &           6 &           2 &           &           &           \\
    & $\pressSqr$-bounds &  [0, 200] &    [0, 200] &  [115, 200] &    [0, 200] &           &           &           \\
  \midrule
n=5 & demand       &       -10 &           1 &           1 &           6 &         2 &           &           \\
    & $\pressSqr$-bounds &  [0, 200] &    [0, 200] &    [0, 200] &  [100, 200] &  [0, 200] &           &           \\
  \midrule
n=6 & demand       &       -10 &           1 &           1 &           6 &         1 &         1 &           \\
    & $\pressSqr$-bounds &  [0, 200] &    [0, 200] &    [0, 200] &   [70, 200] &  [0, 200] &  [0, 200] &           \\
  \midrule
n=7 & demand       &       -10 &           1 &           1 &           1 &         4 &         2 &         1 \\
    & $\pressSqr$-bounds &  [0, 200] &    [0, 200] &    [0, 200] &    [0, 200] & [50, 200] &  [0, 200] &  [0, 200] \\
\bottomrule
\end{tabular}

  \end{adjustbox}
\end{table}
Every network's $\inEqSet$-set (see Subsection~\ref{sec:gas-definitions}) is made up of $n(n-1)$ inequalities $\genIneqFunc_i$  $(i \in I)$.
Each inequality is checked for \emph{feasibility} using \eqref{prob:minconstraints}; all inequalities are checked at once for \emph{infeasibility} using \eqref{prob:poly-sep-proj}.
Both optimization tasks are solved using SDP relaxations of the problems.
We remark that \eqref{prob:poly-sep-proj} could be applied to all constraints individually.
However, experiments show that solving the problem for a single constraint individually is only marginally faster than solving the problem for all constraints at once.
Therefore, we solve the infeasibility problem once with all constraints combined rather than up to $\abs{I}$ subproblems by considering each constraint on its own.

All experiments were carried out on a notebook with four Intel i7-4810MQ cores running at 2.80GHz each and 16 GB of RAM.
The methods were implemented using MATLAB R2016b.
GloptiPoly 3.8 \cite{Henrion2009} was used for the feasibility models since it provides a straight forward interface for solving polynomial optimization problems.
Since the infeasibility method exceeds the capabilities of GloptiPoly, this approach was implemented using the SOS-module of YALMIP R20160930 \cite{Loefberg2009}.
The resulting SDP problems were solved with MOSEK 8 \cite{Mosek2011} using 4 threads.

Some problems were not solvable with the desired precision.
This happened although we evaluated the problems on a variety of solvers including SeDuMi \cite{Sturm1999} and SDPT3 \cite{Toh1999} as well as on a third \modeling tool, SOSTOOLS \cite{Papachristodoulou2013}.
The chosen combination of MOSEK with GloptiPoly and YALMIP offered the most robust behavior amongst all considered possibilities.

\subsection{Effectiveness of the Methods}
The effectiveness of both methods can be measured in the typical \runningtimes of the semidefinite subproblems as well as in hierarchy level at which set containment can be decided.

First, the results of both methods on a fixed network are presented.
\Cref{tab:threenodes-both-methods} shows the outcome of both methods for the $n=3$ instance over $\uncertaintySet_\text{infeas}$.
The columns are separated into groups concerning the feasibility method \cref{prob:minconstraints} and the infeasibility method \cref{prob:poly-sep-proj} with a further distinction into the employed hierarchy level.
The rows in the feasibility part denote the constraint $\genIneqFunc_i$ which is minimized.
Since the infeasibility method is applied to all constraints at once, there is only one row of results in the infeasibility part of the table.
Cells marked by ``-'' indicate numerical difficulties, \ie we were unable to solve the specific problem to the desired precision. \\
The feasibility approach has a positive objective for five out of six subproblems, thus confirming set containment for those constraints.
Out of these five problems, four were decided on the second hierarchy level while one required a level 3 solution.
When applying the infeasibility approach, the level 3 model is unbounded, thereby refuting set containment.
Over all, the instance therefore isn't robust feasible.
\begin{table}
    \centering
    \caption{Objectives of the feasibility method solving \cref{prob:minconstraints} and infeasibility method solving \cref{prob:poly-sep-proj} for the three node instance over $\uncertaintySet_\text{infeas}$. Each row in the feasibility group denotes the subproblem with objective function $\genIneqFunc_i$.}
    \label{tab:threenodes-both-methods}
    \begin{adjustbox}{max width=\linewidth}
\begin{tabular}{r|R{1.5cm}R{1.5cm}||R{1.5cm}R{1.5cm}}
\toprule
  & \multicolumn{2}{c||}{feasibility}            & \multicolumn{2}{c}{infeasibility}          \\
i &        level 2                               &       level 3                              & level 2 &  level 3  \\
\midrule
1 & \textcolor{black}{\textbf{ 216.89}} &  217.39                                    & \multirow{6}{*}{0.00} & \multirow{6}{*}{\textcolor{black}{\textbf{unbnd}}} \\
2 & \textcolor{black}{\textbf{  53.09}} &   -                                        &                      &   \\
3 & \textcolor{black}{\textbf{ 228.63}} &  228.63                                    &                      &   \\
4 &                   -116.79                    &                      -                     &                      &   \\
5 & \textcolor{black}{\textbf{ 201.67}} &    -                                       &                      &                     \\
6 &                    -35.99                    & \textcolor{black}{\textbf{20.34}} &                      &           \\
\bottomrule
\end{tabular}

    \end{adjustbox}
\end{table}

Next, the required levels of the relaxation hierarchy are evaluated.
For this purpose, each constraint of each instance is considered for set containment while gradually increasing the hierarchy level from two to four.
Once a subproblem is solved successfully, the corresponding number of solved problems on this specific level is incremented in the table.

\Cref{tab:success-per-level-ufeas} contains the feasibility methods' results for all instances on the smaller uncertainty set $\uncertaintySet_\text{feas}$.
Each row denotes the considered instance with $n$ nodes and a total of $\abs{I}$ subproblems.
The columns indicate how many of the feasibility problems were solved successfully on the respective level.
For any subproblem, only the first success is counted, thus the sum of each row can be at most $\abs{I}$.
If the row-wise sum is less then $\abs{I}$, this implies that some problems were not solvable with the desired precision.

It can be observed that the feasibility approach almost exclusively confirms set containment at the second level.
At most one subproblem per instance required solving of a level 3 problem.
As suspected, all instances are robust feasibly with this uncertainty region.

\begin{table}
    \centering
    \caption{For a given instance with $n$ nodes, count how many subproblems out of $I$ were solved successfully using the feasibility method.
             Positive outcomes of each subproblem are counted only once on the smallest level.
             All instances were solved over the $\uncertaintySet_\text{feas}$ uncertainty set.}
    \label{tab:success-per-level-ufeas}
    \begin{adjustbox}{max width=\linewidth}
\begin{tabular}{cc||R{1.5cm}R{1.5cm}R{1.5cm}}
\toprule
n        & $\abs{I}$ &  \multicolumn{1}{c}{level 2} &    \multicolumn{1}{c}{level 3}      & \multicolumn{1}{c}{level 4}  \\
\midrule
2        &         2 &   1 &    1     & 0    \\
3        &         6 &   5 &    1     & 0    \\
4        &        12 &  11 &    1     & 0    \\
5        &        20 &  19 &    1     & 0    \\
6        &        30 &  29 &    1     & 0    \\
7        &        42 &  42 &    0     & 0    \\
\bottomrule
\end{tabular}

    \end{adjustbox}
\end{table}

Using the larger uncertainty set $\uncertaintySet_\text{infeas}$, both the feasibility and the infeasibility method were applied to all instances.
\Cref{tab:success-per-level-uinfeas} summarizes all results.
Each row denotes the considered instance with $n$ nodes.
The columns are separated into groups according to the employed method with further distinction for the used hierarchy level.
Each column in the feasibility group indicates how many of the feasibility problems were solved successfully.
For any subproblem, only the first success is counted, therefore the sum of each row in the feasibility group can be at most $\abs{I}$.
The columns in the infeasibility group denote the status of the corresponding problem.
Cells marked with``zero obj.'' indicate global optimality of the considered problem but an objective value of zero, which is insufficient to show certify infeasibility.
Cells marked with a checkmark ($\checkmark$) represent an unbounded objective and thus a negative answer to the set containment question.
As usual, ``-'' marks numerical difficulties.\\
Many feasibility problems were solved successfully at the second hierarchy level.
Set containment of some constraints could not be confirmed with the feasibility method using the given levels.
This is either due to numerical problems or negative objective values.
However, for almost all instances, the infeasibility method was able to provide a certificate against set containment using the third hierarchy level relaxation.
This shows that $\uncertaintySet_\text{infeas}$ is robust infeasible for the $n=2,\ldots,6$ instances.
\begin{table}
    \centering
    \caption{For a given instance with $n$ nodes, count how many subproblems of $I$ were solved successfully using the feasibility method.
             For each subproblem, a positive outcome is counted only once on the smallest level.
             The results of the infeasibility method are displayed in the right column group.
             All instances were solved over the $\uncertaintySet_\text{infeas}$ uncertainty set.}
    \label{tab:success-per-level-uinfeas}
    \begin{adjustbox}{max width=\linewidth}
\begin{tabular}{c||c|R{2.5cm}R{2.5cm}R{2.5cm}||R{2.5cm}R{2.5cm}R{2.5cm}}
\toprule
         &            \multicolumn{4}{c||}{feasibility} &\multicolumn{3}{c}{infeasibility}   \\
n        & $\abs{I}$ &  \multicolumn{1}{c}{level 2} &    \multicolumn{1}{c}{level 3}      & \multicolumn{1}{c||}{level 4}  & \multicolumn{1}{c}{level 2} &       \multicolumn{1}{c}{level 3} & \multicolumn{1}{c}{level 4} \\
\midrule
2        &         2 &   1 &    0     & 0  & zero obj. &       \textcolor{black}{$\checkmark$} & \textcolor{black}{$\checkmark$}  \\
3        &         6 &   4 &    1     & 0  & zero obj. &       \textcolor{black}{$\checkmark$} & \textcolor{black}{$\checkmark$}  \\
4        &        12 &   9 &    1     & 0  & zero obj. &       \textcolor{black}{$\checkmark$} & \textcolor{black}{$\checkmark$}  \\
5        &        20 &  16 &    1     & 0  & zero obj. &       \textcolor{black}{$\checkmark$} & \textcolor{black}{$\checkmark$}  \\
6        &        30 &  25 &    1     & 0  & zero obj. &       \textcolor{black}{$\checkmark$} & \textcolor{black}{$\checkmark$}  \\
7        &        42 &  36 &    2     & 0  & zero obj. &       - & -  \\
\bottomrule
\end{tabular}

    \end{adjustbox}
\end{table}

To conclude this set of test runs, \Cref{tab:feasmethod-runtimes,tab:infeasmethod-runtimes} show the characteristic \runtimes where each row denotes the $n$-node instance.
For the feasibility approach, the columns show mean \runtime and standard deviation using the specific relaxation hierarchy level.
All values are aggregated over all subproblems of the given instance and hierarchy level.
Since the infeasibility approach is a single problem when instance and hierarchy level are fixed, no aggregation is possible and we show the \runtime as-is.
It can be observed that the \runtimes are quite small for the level 2 problems but increase quickly for higher levels and larger instances.
\begin{table}
    \centering
    \caption{Mean and standard deviation of the feasibility method's \runtime on $\uncertaintySet_\text{feas}$.
             Each row shows the aggregated values for all subproblems of the $n$-node instance per hierarchy level.}
    \label{tab:feasmethod-runtimes}
    \begin{adjustbox}{max width=\linewidth}
\begin{tabular}{c||R{1.5cm}R{1.5cm}|R{1.5cm}R{1.5cm}|R{1.5cm}R{1.5cm}}
\toprule
   & \multicolumn{2}{c|}{level 2} & \multicolumn{2}{c|}{level 3}        & \multicolumn{2}{c}{level 4} \\
n &      \multicolumn{1}{c}{mean} &       \multicolumn{1}{c|}{std} &       \multicolumn{1}{c}{mean} &       \multicolumn{1}{c|}{std} &        \multicolumn{1}{c}{mean} &        \multicolumn{1}{c}{std} \\
\midrule
2     &  \SI{0.032}{\second} &  \SI{0.019}{\second} &   \SI{0.042}{\second} &  \SI{0.014}{\second} &    \SI{0.111}{\second} &   \SI{0.021}{\second} \\
3     &  \SI{0.040}{\second} &  \SI{0.010}{\second} &   \SI{0.111}{\second} &  \SI{0.070}{\second} &    \SI{0.605}{\second} &   \SI{0.083}{\second} \\
4     &  \SI{0.048}{\second} &  \SI{0.015}{\second} &   \SI{0.324}{\second} &  \SI{0.030}{\second} &    \SI{3.899}{\second} &   \SI{0.187}{\second} \\
5     &  \SI{0.083}{\second} &  \SI{0.024}{\second} &   \SI{1.229}{\second} &  \SI{0.185}{\second} &   \SI{26.679}{\second} &   \SI{3.040}{\second} \\
6     &  \SI{0.147}{\second} &  \SI{0.047}{\second} &   \SI{4.533}{\second} &  \SI{0.894}{\second} &  \SI{148.397}{\second} &   \SI{9.500}{\second} \\
7     &  \SI{0.241}{\second} &  \SI{0.061}{\second} &  \SI{15.721}{\second} &  \SI{2.328}{\second} &  \SI{809.944}{\second} &  \SI{71.564}{\second} \\
\bottomrule
\end{tabular}

    \end{adjustbox}
\end{table}
\begin{table}
    \centering
    \caption{Runtime of the infeasibility method on $\uncertaintySet_\text{infeas}$ where
             each row denotes the $n$-node instance and each column the respective level.}
    \label{tab:infeasmethod-runtimes}
    \begin{adjustbox}{max width=\linewidth}
\begin{tabular}{c||R{1.5cm}R{1.5cm}R{1.5cm}}\toprule
n &    level 2              &        level 3           &        level 4            \\
\midrule
2 &  \SI[round-mode=places,round-precision=3]{0.415459}{\second} &  \SI[round-mode=places,round-precision=3]{ 0.564157}{\second} &  \SI[round-mode=places,round-precision=3]{  0.504456}{\second} \\
3 &  \SI[round-mode=places,round-precision=3]{0.435395}{\second} &  \SI[round-mode=places,round-precision=3]{ 0.501112}{\second} &  \SI[round-mode=places,round-precision=3]{  0.849461}{\second} \\
4 &  \SI[round-mode=places,round-precision=3]{0.433069}{\second} &  \SI[round-mode=places,round-precision=3]{ 0.770711}{\second} &  \SI[round-mode=places,round-precision=3]{  3.908801}{\second} \\
5 &  \SI[round-mode=places,round-precision=3]{0.389520}{\second} &  \SI[round-mode=places,round-precision=3]{ 1.854349}{\second} &  \SI[round-mode=places,round-precision=3]{ 20.202790}{\second} \\
6 &  \SI[round-mode=places,round-precision=3]{0.431818}{\second} &  \SI[round-mode=places,round-precision=3]{ 4.914542}{\second} &  \SI[round-mode=places,round-precision=3]{134.531288}{\second} \\
7 &  \SI[round-mode=places,round-precision=3]{0.643027}{\second} &  \SI[round-mode=places,round-precision=3]{10.280900}{\second} &  \SI[round-mode=places,round-precision=3]{975.160599}{\second} \\
\bottomrule
\end{tabular}


    \end{adjustbox}
\end{table}

\subsection{Evaluation of the Gap Between Methods}
The proposed methods are based on semidefinite relaxations of polynomial problems (see \cref{subsec:poly-methods}).
Since the objective values of relaxed problems are smaller or equal than the non-relaxed optimal values (for minimization problems), it is expected that the feasibility and infeasibility approach can decide a smaller number of problems than their non-relaxed counterparts.
The aim of this section is to investigate how large the ``gap'' between feasibility and infeasibility approach is.
After fixing a hierarchy level, all problems which cannot be decided by either feasibility or infeasibility approach are said to fall into this relaxation gap.
In order to compare both methods, we need to apply the infeasibility approach to the same constraint as the feasibility method.
This is different to all previous tests where the infeasibility method was solved for all constraints at once.

Consider the parameterized uncertainty set $\uncertaintySet(c)$ for increasing $c \in [2, 4]$.
From \cref{tab:success-per-level-ufeas}, it can be derived that all subproblems are feasible for $\uncertaintySet_\text{feas} = \uncertaintySet(2)$.
On the other hand, as \cref{tab:success-per-level-uinfeas} shows, all instances are infeasible for the larger $\uncertaintySet_\text{infeas} = \uncertaintySet(4)$.
This implies that there is always at least one violated constraint $\genIneqFunc_i$ when using $\uncertaintySet(4)$.

For this \testset, we select one subproblem per instance that is infeasible for the larger uncertainty set.
Then, the feasibility and infeasibility approaches are solved for the selected subproblems over all twenty uncertainty sets $\uncertaintySet(c)$ for $c = 2 + i\frac{1}{10}$, $i=0,\ldots,20$.
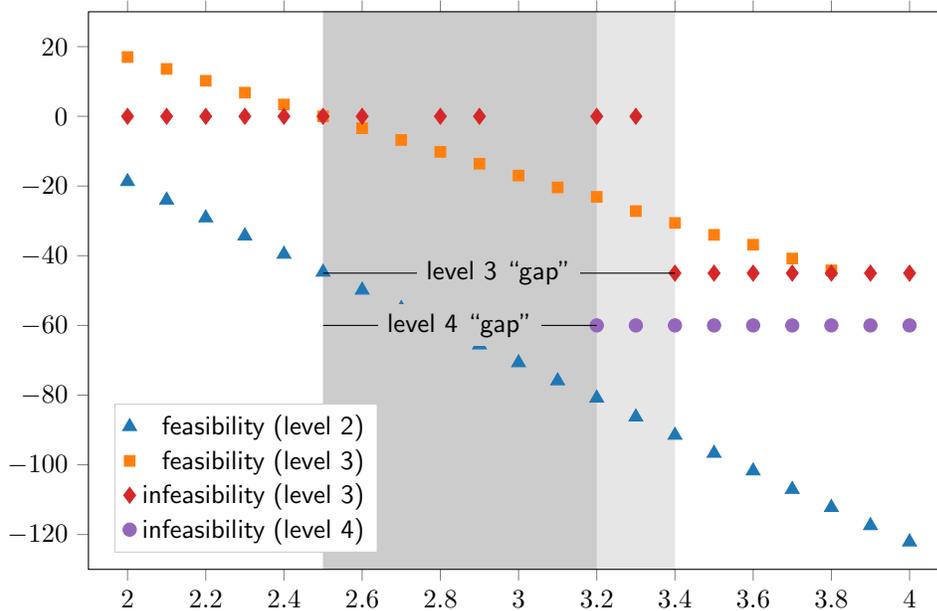
\begin{figure}
    \centering
    \caption{Objective values of the two methods for varied $c\in\{2.0,2.1,\ldots, 4\}$ on the four node instance.}
    \label{fig:method-gap-4nodes}

\begin{tikzpicture}[font=\sffamily]
\pgfplotsset{set layers}

\definecolor{color0}{rgb}{0.12156862745098,0.466666666666667,0.705882352941177}
\definecolor{color3}{rgb}{0.83921568627451,0.152941176470588,0.156862745098039}
\definecolor{color1}{rgb}{1,0.498039215686275,0.0549019607843137}
\definecolor{color4}{rgb}{0.580392156862745,0.403921568627451,0.741176470588235}
\definecolor{color2}{rgb}{0.172549019607843,0.627450980392157,0.172549019607843}

\begin{axis}[set layers,
xmin=1.9, xmax=4.1,
ymin=-130, ymax=30,
height=9cm,
width=\textwidth,
tick align=outside,
x grid style={white!69.019607843137251!black},
y grid style={white!69.019607843137251!black},
legend entries={{feasibility (level 2)},{feasibility (level 3)},{infeasibility (level 3)},{infeasibility (level 4)}},
legend cell align={right},
legend style={draw=white!80.0!black},
legend pos=south west,
axis on top
]
\addplot [only marks, mark size=3pt,mark=triangle*, draw=color0, fill=color0, colormap/viridis]
table {%
x                      y
+2.000000000000000e+00 -1.872616764128380e+01
+2.100000000000000e+00 -2.402950470273320e+01
+2.200000000000000e+00 -2.913778183792030e+01
+2.300000000000000e+00 -3.432225474442940e+01
+2.400000000000000e+00 -3.957718556619621e+01
+2.500000000000000e+00 -4.471789876135110e+01
+2.600000000000000e+00 -4.987507497484321e+01
+2.700000000000000e+00 -5.518224924353420e+01
+2.800000000000000e+00 -6.028909259538511e+01
+2.900000000000000e+00 -6.559882227738980e+01
+3.000000000000000e+00 -7.070876545449708e+01
+3.100000000000000e+00 -7.590004062972730e+01
+3.200000000000000e+00 -8.085930226132150e+01
+3.300000000000000e+00 -8.627773659165810e+01
+3.400000000000000e+00 -9.152256390877990e+01
+3.500000000000000e+00 -9.669671165420701e+01
+3.600000000000000e+00 -1.017509181707200e+02
+3.700000000000000e+00 -1.070499030646880e+02
+3.800000000000000e+00 -1.122368506512950e+02
+3.900000000000000e+00 -1.174545533996090e+02
+4.000000000000000e+00 -1.221336222692290e+02
};
\addplot [only marks, mark size=2pt, mark=square*, draw=color1, fill=color1, colormap/viridis]
table {%
x                      y
+2.000000000000000e+00 +1.700035326752310e+01
+2.100000000000000e+00 +1.360004208305070e+01
+2.200000000000000e+00 +1.020000208579250e+01
+2.300000000000000e+00 +6.800071780615990e+00
+2.400000000000000e+00 +3.400001653184920e+00
+2.500000000000000e+00 +1.207869742287930e-04
+2.600000000000000e+00 -3.399989070496280e+00
+2.700000000000000e+00 -6.799396533713850e+00
+2.800000000000000e+00 -1.019999993759820e+01
+2.900000000000000e+00 -1.359999994768100e+01
+3.000000000000000e+00 -1.699999926226810e+01
+3.100000000000000e+00 -2.039991845214580e+01
+3.200000000000000e+00 -2.310615707555750e+01
+3.300000000000000e+00 -2.719998668688780e+01
+3.400000000000000e+00 -3.059999408222630e+01
+3.500000000000000e+00 -3.399999470268520e+01
+3.600000000000000e+00 -3.683742028024960e+01
+3.700000000000000e+00 -4.079980995212840e+01
+3.800000000000000e+00 -4.419961117310200e+01
};

\addplot [only marks, mark size=3pt, mark=diamond*, draw=color3, fill=color3, colormap/viridis]
table {%
x                      y
+2.000000000000000e+00 -7.886832246820370e-11
+2.100000000000000e+00 -4.893217453515000e-09
+2.200000000000000e+00 -2.698246889917990e-09
+2.300000000000000e+00 -4.750759069233230e-09
+2.400000000000000e+00 -6.559857025864610e-10
+2.500000000000000e+00 -1.750281887068390e-08
+2.600000000000000e+00 -2.798942117031130e-08
+2.800000000000000e+00 -1.020718320933130e-07
+2.900000000000000e+00 -5.802299402166320e-08
+3.200000000000000e+00 -4.107777210691360e-07
+3.300000000000000e+00 -5.546945469586670e-06
+3.400000000000000e+00 -4.500000000000000e+01
+3.500000000000000e+00 -4.500000000000000e+01
+3.600000000000000e+00 -4.500000000000000e+01
+3.700000000000000e+00 -4.500000000000000e+01
+3.800000000000000e+00 -4.500000000000000e+01
+3.900000000000000e+00 -4.500000000000000e+01
+4.000000000000000e+00 -4.500000000000000e+01
};
\addplot [only marks, mark size=2.5pt,mark=*, draw=color4, fill=color4, colormap/viridis]
table {%
x                      y
+3.200000000000000e+00 -6.000000000000000e+01
+3.300000000000000e+00 -6.000000000000000e+01
+3.400000000000000e+00 -6.000000000000000e+01
+3.500000000000000e+00 -6.000000000000000e+01
+3.600000000000000e+00 -6.000000000000000e+01
+3.700000000000000e+00 -6.000000000000000e+01
+3.800000000000000e+00 -6.000000000000000e+01
+3.900000000000000e+00 -6.000000000000000e+01
+4.000000000000000e+00 -6.000000000000000e+01
};
\begin{pgfonlayer}{main}
\addplot[fill=black!10!white,draw=none] coordinates {(2.5,50) (3.4,50) (3.4,-200) (2.5,-200) } \closedcycle;
\addplot[fill=black!20!white,draw=none] coordinates {(2.5,50) (3.2,50) (3.2,-200) (2.5,-200) } \closedcycle;
\end{pgfonlayer}

\begin{pgfonlayer}{axis foreground}

\draw (axis cs:2.5,-45) -- (axis cs:3.4,-45) node [midway,fill=black!20!white] {level 3 ``gap''};
\draw (axis cs:2.5,-60) -- (axis cs:3.2,-60) node [midway,fill=black!20!white] {level 4 ``gap''};
\end{pgfonlayer}
\end{axis}

\end{tikzpicture}

\end{figure}
\Cref{fig:method-gap-4nodes} shows the results in more detail for the four node instance.
We consider the subproblem that is marked as infeasible in \cref{tab:success-per-level-uinfeas}.
The objective values of the feasibility problem \cref{prob:minconstraints} are marked with blue (level 2) and orange (level 3) triangles in the figure.
Additionally, the values of solving \cref{prob:poly-sep-proj} are marked using red (level 3) and purple (level 4) circles.
We remark that the outcome of the infeasibility method for level 2 is omitted since as all subproblems were feasible but had objective value of zero.
Unbounded subproblems of the infeasibility method are marked with an objective value of fifteen times their level.
Missing data points can be attributed to numerical difficulties of the SDP solver.

As can be observed, no instance can be decided on the second hierarchy level since all solutions of the feasibility method have negative objective values and all solutions of the infeasibility method have objective value zero (not shown in the figure).
On the third hierarchy level, the feasibility approach confirms set containment for $c \in \{2.0, 2.1, \ldots, 2.5\}$ as these problems have positive objective value.
With the same level, the infeasibility approach finds certificates against set containment for $c \in \{3.4,\ldots, 4.0\}$.
For the problems with $c\in \{2.6, \ldots, 3.3\}$, neither of the methods was able to decide set containment successfully (disregarding numerical difficulties).
In this range, the feasibility method only returns negative objective values and all objective  values of the infeasibility method were zero.

Increasing the hierarchy level to four leads to numerical problems for all feasibility models, but also increases the number of successfully solved infeasibility models by two ($c=3.2$ and $c=3.3$).
This confirms the expectation that increasing the hierarchy level can lead to more certificates for non-set containment.

The results over all instances is summarized in \cref{tab:ugrowth-gap}.
For each hierarchy level, it shows both he largest value for $c$ (indicated by $c_\text{feas}$) such that the feasibility approach confirms set containment and the smallest value for $c$ (indicated by $c_\text{infeas}$) where a certificate for infeasibility could be obtained.
Note that these bounds on $c$ take all smaller hierarchy levels into account as well.
The gap column is the difference $c_\text{infeas}-c_\text{feas}$ and indicates the range of problems which could not be solved successfully with either feasibility and infeasibility approach.
Again it can be observed that the gap is reduced after increasing the hierarchy level as this leads to a tighter relaxation for the feasibility approach and admits a richer set of polynomials for the infeasibility certificate.
\begin{table}
    \centering
    \caption{Extreme values for $c$ where the feasibility ($c_\text{feas}$) and infeasibility ($c_\text{infeas}$) methods can solve the problem.}
    \label{tab:ugrowth-gap}
    \begin{adjustbox}{max width=\linewidth}
\begin{tabular}{c||R{1.5cm}R{1.5cm}R{1.5cm}|R{1.5cm}R{1.5cm}R{1.5cm}}
\toprule
      & \multicolumn{3}{c|}{level 3}        & \multicolumn{3}{c}{level 4} \\
n     & $c_\text{feas}$ & $c_\text{infeas}$ & gap  & $c_\text{feas}$ & $c_\text{infeas}$ & gap \\
\midrule
2     &             2.4 &               3.3 &  0.9 &             2.4 &               2.9 &  0.5 \\
3     &             2.4 &               3.2 &  0.8 &             2.4 &               3.1 &  0.7 \\
4     &             2.5 &               3.4 &  0.9 &             2.5 &               3.2 &  0.7 \\
5     &             2.4 &               3.4 &  1.0 &             2.4 &               3.3 &  0.9 \\
6     &             2.6 &               3.7 &  1.1 &             3.1 &               3.6 &  0.5 \\
7     &             3.0 &                   &      &             3.3 &                   &      \\
\bottomrule
\end{tabular}

    \end{adjustbox}
\end{table}

\section{Concluding Remarks}\label{sec:conclusion}
In this paper, we study feasibility and infeasibility of nonlinear two-stage fully adjustable robust feasibility problems with an empty first stage.
We propose to solve this problem by deciding whether the given uncertainty set is a subset of the projection of all feasible (uncertainty, solution)-pairs.
A particular challenge with this approach is given by the projected set whose defining constraints are typically not available.
Compared to typical methods from robust optimization, our approach requires no additional restrictions such as like convexity of the problem or the uncertainty set.
Furthermore, it can decide the fully adjustable problem without using (possible approximative) decision rules for the second stage variables.
We develop two approaches towards solving this problem, one for deciding feasibility and one for deciding infeasibility.
As we solve relaxations of the proposed methods in practice, two distinct methods are necessary since a single method cannot be expected to solve both sides of the question.
The first approach for deciding infeasibility uses a separation argument to find polynomial that certifies violation of the set containment question.
The second approach is based the assumption that part of the problem constraints define a unique solution for a fixed element of the uncertainty set.
Exploiting this fact allows a reformulation as a set containment question over two regular (non-projected) sets.
Set containment can then be confirmed by minimizing the constraint functions of the superset over the subset.
In our setting, both methods lead to polynomial optimization problems.
For solving the polynomial problems in practice, we fall back onto the Lasserre SDP relaxation hierarchy.

The proposed models are then applied to an uncertain gas transport problem.
This is a non-convex quadratic problem with absolute value functions.
First, we show how this problem can be decided exactly on tree structured using LP duality to decide set containment of polyhedra.
Next, this result is used to preprocess larger problems so that only cycles remain.
Lastly, we present different ideas how to remove the absolute values functions from the problem formulation.
By removing the absolute values, the problem is transformed to a purely polynomial description to which the proposed methods can be applied.

Both approaches are then solved on a set of cyclic test networks.
For problems where deciding robustness was possible, we observe that typically level 2 or level 3 of the Lasserre hierarchy were sufficient.
We further investigate the strength of the relaxation by searching for uncertainty sets where neither feasibility nor infeasibility can be decided for a given instance and hierarchy level.
As can be expected, increasing the level yields tighter relaxations which translates into a more effective method.

As an outlook, the developed ideas could be applied to similar potential driven network flow problems such as \eg the DC optimal power problem flow or water network problems.
Concerning the application to gas networks, extending the relation between subsets of the uncertainty set and flow directions to networks with multiple intermeshed cycles is another relevant question.
Lastly, using the feasibility methods as part of a larger two-stage robust optimization task with non empty first stage provides another possible extension of the studied problem.
In case of gas, first stage variables model decisions of the network operator \eg the compressor machines' power level.

\appendix

\section*{Acknowledgments}
The authors thank Prof.~Dick~den~Hertog for fruitful discussions on the topic.
Furthermore, the authors would like to thank the anonymous reviewers
for their valuable comments and insightful suggestions that improved the quality
of the paper.
\bibliographystyle{siamplain}
\bibliography{references}
\end{document}